\documentclass[reqno,twoside]{amsart}
\usepackage{amsfonts}
\usepackage{amsmath,amssymb}
\usepackage{epic}
\usepackage{pstricks}
\usepackage{graphicx}
\usepackage{xcolor}


\usepackage{a4wide,amsmath,amssymb,latexsym,amsthm}
\usepackage{eucal}
\setlength{\textwidth}{16cm}
\setlength{\textheight}{20 cm}

\setcounter{secnumdepth}{2} \setcounter{section}{0}

\setcounter{MaxMatrixCols}{10}

\usepackage{graphicx}

\newtheorem{theorem}{Theorem}[section]
\newtheorem{proposition}[theorem]{Proposition}
\newtheorem{remark}[theorem]{Remark}
\newtheorem{lemma}[theorem]{Lemma}

\numberwithin{equation}{section}

\graphicspath{{./Imgs/}}

\newcommand{\R}{\mathbb R}
 
\newcommand{\N}{\mathbb N}

\newcommand{\A}{\mathcal A}
\newcommand{\D}{\displaystyle}
\newcommand{\be}{\begin{equation}}
\newcommand{\ee}{\end{equation}}
\newcommand{\ba}{\begin{eqnarray}}
\newcommand{\ea}{\end{eqnarray}}
\newcommand{\beq}{\begin{equation}}
\newcommand{\eeq}{\end{equation}}

\usepackage{color}
\definecolor{red}{rgb}{0,0,0}


\usepackage[textsize=small]{todonotes}

\numberwithin{equation}{section}

\def\N{{\mathbb{N}}}

\def\A{{\bf A}}

\usepackage{color}
\usepackage{stmaryrd}





\keywords{Carleman inequalities, Bukhgeim--Klibanov method, hidden regularity, Moore--Gibson--Thompson equation}

\begin{document}

\title[Inverse problem MGT]{An inverse problem for Moore--Gibson--Thompson equation arising in high intensity ultrasound}

\author{R. Arancibia}
\address{R. Arancibia, Universidad T\'ecnica Federico Santa Mar\'ia, Departamento de Matem\'atica, Casilla 110-V, Valpara\'iso, Chile}
\email{rogelio.arancibia.13@sansano.usm.cl}

\author{R. Lecaros}
\address{R. Lecaros, Universidad T\'ecnica Federico Santa Mar\'ia, Departamento de Matem\'atica, Casilla 110-V, Valpara\'iso, Chile}
\email{rodrigo.lecaros@usm.cl}

\author{A. Mercado}
\address{A. Mercado, Universidad T\'ecnica Federico Santa Mar\'ia, Departamento de Matem\'atica, Casilla 110-V, Valpara\'iso, Chile}
\email{alberto.mercado@usm.cl}

\author{S. Zamorano}
\address{S. Zamorano, University of Santiago of Chile (USACH), Faculty of Science, Mathematics and Computer Science Department, Casilla 307, Correo 2, Santiago, Chile.}
 \email{sebastian.zamorano@usach.cl}

\thanks{R. Arancibia was partially supported by the Direcci\'on de Postgrados y Programas (DPP) of the U. T\'ecnica Federico Santa Mar\'ia. R. Lecaros was partially supported by FONDECYT(Chile) Grant NO: 11180874. The work of A. Mercado  was partially supported by FONDECYT (Chile) Grant NO: 1171712. The second and third author was partially supported by BASAL Project, CMM - U. de Chile. S. Zamorano was supported by the FONDECYT(Chile) Postdoctoral Grant NO: 3180322 nd by ANID PAI Convocatoria Nacional Subvenci\'on a la Instalaci\'on en la Academia Convocatoria 2019 PAI 77190106.}

\begin{abstract}
In this article we study the inverse problem  of recovering  a space-dependent coefficient of the Moore--Gibson--Thompson (MGT) equation, from  knowledge of  the  trace of the solution on
some  open subset of  the boundary.  We obtain  the  Lipschitz stability   for this inverse problem, and we design a  convergent  algorithm  for the reconstruction of  the unknown coefficient. The techniques  used  are based on  Carleman inequalities for wave equations and properties of the MGT equation.
\end{abstract}

\maketitle


\section{Introduction}

Let $\Omega\subseteq\R^{N}$ be a nonempty bounded open set (for $N=2$ or $N=3$), with  a smooth  boundary $\Gamma$, and let $T>0$. We consider the MGT equation   
\begin{align}\label{eqmain2}
\left\{
\begin{array}{ll}
 \tau u_{ttt}+ \alpha  u_{tt}-c^2\Delta u -b\Delta u_{t}=f,  &\Omega\times(0,T)\\
u=0, & \Gamma \times (0,T) \\
u(\cdot,0) = u_0, \medspace u_t(\cdot,0) = u_1, \medspace u_{tt}(\cdot,0) = u_2,   &\Omega,
\end{array}
\right.
\end{align}
where $\alpha \in L^\infty(\Omega)$, $c \in \R$ and $\tau,b>0$. 

This equation arises  as a linearization of a model for wave propagation in viscous thermally relaxing fluids. In that cases, the space-dependent coefficient $\alpha$ depends on a viscosity of the fluid \cite{kaltenbacher2012exponential}. This third order in time equation
has been studied by several authors from various  points of view. We can mentioned, among others, the works \cite{ kaltenbacher2015mathematics, kaltenbacher2011wellposedness, kaltenbacher2012well,  liu2014inverse, marchand2012abstract, pellicer2019optimal, lizama2019controllability} for a variety of problems related to this equation.

In particular, one interesting characteristic of this equation  is that the structural damping $b$ plays  a crucial role for the well-posedness,  contrary of second order equations with damping ($\tau=0$ and $\alpha>0$ in \eqref{eqmain2}). For instance, in  \cite{kaltenbacher2011wellposedness} it is
proved that, if $b=0$ and $\alpha$ a positive constant, there does not exist an infinitesimal generator  of a semigroup, in contrast with second order equations, where the structural damping does not affect the well-posedness  of the equation. 
The parameter $\gamma:=\alpha-\frac{\tau c^2}{b}$  gives relevant information regarding the stability of the system. If $\gamma>0$, the group associated to the equation is exponentially stable, and for $\gamma=0$, the group is conservative, see for instance \cite{marchand2012abstract}. 
On the other hand, Conejero, Lizama and Rodenas \cite{conejero2015chaotic} proved that the one-dimensional equation exhibits a chaotic behavior
if $\gamma<0$. 
Also, for the case in which  $\alpha$ is given by a function  depending on space and time, 
 the well posedness and the exponential decay was proved by Kaltenbacher and  Lasiecka in 
\cite{kaltenbacher2012exponential}.

 Concerning the well posedness of the system \eqref{eqmain2}, it is known (see \cite[Theorem 2.2]{kaltenbacher2012exponential}) that, 
given a coefficient   $\alpha\in L^{\infty}(\Omega)$ and data satisfying 
\begin{align}\label{dat}
 (u_0,u_1,u_2)\in H^2(\Omega)\cap H_0^1(\Omega)\times H_0^1(\Omega)\times L^2(\Omega),\; 
 f\in L^2(0,T;L^2(\Omega)), 
 \end{align}
  the system \eqref{eqmain2} admits a unique weak solution $(u,u_t,u_{tt})$  satisfying 
$$
(u,u_{t},u_{tt})\in C([0,T]; H^2(\Omega)\cap H_0^1(\Omega)\times H_0^1(\Omega)\times L^2(\Omega)).
$$

 In this article, we study  the inverse problem of recovering the unknown space-dependent coefficient 
$\alpha = \alpha(x)$, the frictional damping term, 
from partial knowledge of some trace of the solution $u$ at the boundary, namely,
\begin{align*}
\D\frac{\partial u}{\partial n}\mbox{ on }\Gamma_0\times(0,T),
\end{align*}
where $\Gamma_0 \subset \Gamma$ is a relatively open subset of the boundary, called the observation region, and $n$ is the outward unit normal vector on $\Gamma$. 
We will often write  $u(\alpha)$  
to denote the dependence of $u$ on the coefficient $\alpha$.

More precisely,  in this paper we  study the following properties  of the stated inverse problem:

\begin{itemize}
\item {\bf Uniqueness:} 
\begin{align*}
\frac{\partial u(\alpha_1)}{\partial n} = \frac{\partial u(\alpha_2)}{\partial n}
\mbox{ on }\Gamma_0\times(0,T)\mbox{ implies }\alpha_1=\alpha_2\mbox{ in }\Omega.
\end{align*}

\item {\bf Stability:} 
\begin{align*}
\|\alpha_1-\alpha_2\|_{X(\Omega)}\leq C\left\|\frac{\partial u(\alpha_1)}{\partial n} - \frac{\partial u(\alpha_2)}{\partial n}\right\|_{Y(\Gamma_0)},
\end{align*}
for some appropriate spaces $X(\Omega)$ and $Y(\Gamma_0)$.

\item {\bf Reconstruction:} 
Design an algorithm to recover the coefficient $\alpha$ from the knowledge of $\D \frac{\partial u(\alpha)}{\partial n}$ on $\Gamma_0$.
\end{itemize}

The first part of this work is concerned with the uniqueness and stability issues of the inverse problem. 
We obtain a  stability result, which directly  implies a uniqueness one,  under certain conditions for $\alpha$, $\Gamma_0$ and the time $T$. 
We use the Bukheim--Klibanov method, which is based on 
 the so-called Carleman estimates. 
 We  prove a Carleman  estimate for MGT equation, which will be based on the Carleman inequality for wave operator given in \cite{baudouin2013global}.

The second part of this work is focused on giving a constructive and iterative algorithm which allows us to find the coefficient $\alpha$ from the knowledge of the additional data $\frac{\partial u}{\partial n}$ on the observation domain $\Gamma_0$. For that, we 
study an appropriate  functional,  and we  show that this functional admits a unique minimizer on a suitable  domain.
Using this results, we  will prove the convergence of an  iterative algorithm. We refer to Section \ref{s4} for details.
This algorithm is adapted from   \cite{baudouin2013global}, 
where it was introduced an algorithm for recovering zero-order terms in  the wave equation.
We can also mention the works of Beilina and Klibanov \cite{beilina2012approximate, beilina2015globally}, 
%
%
where the authors studied the reconstruction of a coefficient in a hyperbolic equation using the Carleman weight.

The remaining  of this paper is organized as follows. In Section \ref{s2-1} we present our main results:
 Theorem \ref{mainresult},  which establishes  the stabilization property of our inverse problem and a Carleman type estimate which is contained in Theorem \ref{observability}.
In section \ref{s2} we present some auxiliary results of the MGT equation which are needed for the inverse problem. Besides, we prove the hidden regularity for the MGT equation. In section \ref{s3} we prove the main results  of our work, namely Theorems \ref{mainresult} and \ref{observability}. Finally, in section \ref{s4} we focus on the algorithm for the reconstruction of coefficient $\alpha$ and we prove the convergence of this Algorithm.

\section{Statement of the main results}\label{s2-1}
In this section we state our main results concerning the inverse problem proposed in the Introduction.  In order to state the precise result that we obtain, we consider the following set of  admissible coefficients:
\begin{align}\label{asuma}
{\mathcal A}_M  = \left\{  \alpha \in L^\infty(\Omega),   \quad  \frac{c^2}{b} \leq \alpha(x) \leq M  \quad  \forall x\in\overline{\Omega} \right\},
\end{align}
and  the  geometrical assumptions, sometimes referred to as the Gamma--condition of Lions or the multiplier condition:
\begin{align}\label{asumb}
\exists x_0\notin\Omega\mbox{ such that }\Gamma_0\supset\{x\in\Gamma: \ (x-x_0)\cdot n\geq 0\},
\end{align}
 and 
\begin{align}\label{asumc}
T>\sup_{x\in\Omega}|x-x_0|.
\end{align}

Henceforth we will set $\tau=1$ for simplicity. 
Our main result concerns  the  stability of the inverse problem:
\begin{theorem}\label{mainresult}
For  $\Gamma_0 \subset \Gamma$,  $M>0$ and $T >0$ satisfying  \eqref{asumb}-\eqref{asumc}, 
suppose  there exists $\eta >0$ such that 
\begin{align}\label{mineta}
|u_2| \geq \eta > 0 \quad  a.e. \, in \, \, \Omega,
\end{align}
 and $\alpha_2 \in \A_M$ is such that  the unique solution $u(\alpha_2)$ of \eqref{eqmain2} satisfies
\begin{align}\label{regsol}
u(\alpha_2) \in H^3(0,T;L^{\infty}(\Omega)). 
\end{align}
Then there exists a constant $C>0$ such that
\begin{align}\label{stabi}
C^{-1}\|\alpha_1-\alpha_2\|_{L^2(\Omega)}^2 
\leq
 \left\| \frac{\partial u(\alpha_1)}{\partial n} -  \frac{\partial u(\alpha_2)}{\partial n} \right\|_{H^2(0,T;L^2(\Gamma_0))}^2
\leq  C\|\alpha_1-\alpha_2\|_{L^2(\Omega)}^2
\end{align}
for all $\alpha_1 \in \A_M$.
\end{theorem}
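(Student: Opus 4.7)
The plan is to follow the Bukhgeim--Klibanov scheme. Introduce the differences
$v := u(\alpha_1)-u(\alpha_2)$ and $w := \alpha_1-\alpha_2$. Since $u(\alpha_1)$ and $u(\alpha_2)$ share the same Cauchy data, $v$ solves the MGT system
\[
 v_{ttt}+\alpha_1 v_{tt}-c^2\Delta v-b\Delta v_t = -w\,\partial_t^2 u(\alpha_2),\qquad v(0)=v_t(0)=v_{tt}(0)=0.
\]
Evaluating the equation at $t=0$ yields $v_{ttt}(\cdot,0) = -w\,u_2$, which, together with the pointwise lower bound $|u_2|\geq\eta$, will allow us to recover $w$ in $L^2(\Omega)$ from an $L^2$-estimate on $v_{ttt}(\cdot,0)$. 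This is the central BK identity and the reason the hypothesis \eqref{mineta} is essential.

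Next I would reduce MGT to a wave operator to make Carleman estimates applicable. Setting $z := v_t + \frac{c^2}{b}v$ one checks that
\[
 z_{tt} - b\Delta z = -w\,\partial_t^2 u(\alpha_2) \;-\; \bigl(\alpha_1 - \tfrac{c^2}{b}\bigr)\,v_{tt},
\]
so $z$ satisfies a wave equation whose right-hand side contains $w$ together with a lower-order term with bounded coefficient $\gamma=\alpha_1-c^2/b\geq 0$. Differentiating once in time and writing $\tilde z:=z_t$, the initial datum of $\tilde z_t$ is exactly $v_{ttt}(0)=-w\,u_2$ (the quantity we wish to bound), while the other Cauchy data vanish. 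The regularity hypothesis \eqref{regsol} on $u(\alpha_2)$ legitimises these time differentiations and guarantees that the source remains in $L^2$.

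To apply the Carleman inequality for the wave operator from Theorem \ref{observability} (which follows the construction of \cite{baudouin2013global}) I would extend $\tilde z$ to negative times by the standard odd/even reflection so that one works on a symmetric interval $(-T,T)$, with the weight function $\varphi(x,t)=|x-x_0|^2 - \beta t^2 + C_0$ concentrated at $t=0$ on the set where the multiplier condition \eqref{asumb}--\eqref{asumc} yields a good pseudoconvexity sign. Taking the Carleman parameter $s$ large, the lower-order contribution coming from $\gamma v_{tt}$ is absorbed into the left-hand side, and integrating by parts in $t$ the quantity $\int_{\Omega}\tilde z_{tt}(x,0)\tilde z_t(x,0)\,e^{2s\varphi(\cdot,0)}dx$ transfers the boundary-weighted $L^2$-norm of $\tilde z_t(0)=v_{ttt}(0)=-wu_2$ into the bulk Carleman integral; combined with the observation term on $\Gamma_0$ this yields the pointwise identity $w=-v_{ttt}(0)/u_2$ controlled in $L^2(\Omega)$ by the $H^2(0,T;L^2(\Gamma_0))$-norm of $\partial_n v$. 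The conversion between the boundary trace $\partial_n z$ appearing naturally in the wave Carleman estimate and the trace $\partial_n v$ appearing in \eqref{stabi} uses that $\partial_n z = \partial_n v_t + \tfrac{c^2}{b}\partial_n v$, whose norm is controlled by $\|\partial_n v\|_{H^2(0,T;L^2(\Gamma_0))}$.

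The upper bound in \eqref{stabi} is the easy direction: by the hidden-regularity result for the MGT equation (announced in the introduction and proved in Section \ref{s2}) the trace $\partial_n v$ depends continuously on the source $-w\partial_t^2 u(\alpha_2)$ of the $v$-equation, which is linear in $w$, giving directly the right-hand inequality. The main obstacle in the lower bound is twofold: first, the MGT operator is of third order in time, so one must carefully orchestrate the time differentiations and the reduction $z=v_t+(c^2/b)v$ to land on a wave equation whose residual lower-order terms can be absorbed by the Carleman weight; second, the source $w\partial_t^2u(\alpha_2)$ demands the extra regularity \eqref{regsol} on $u(\alpha_2)$, and tracking this regularity through the differentiated system while preserving the hypothesis $\alpha_2\in\mathcal{A}_M$ is the most delicate bookkeeping of the argument.
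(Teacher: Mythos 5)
Your overall scheme is sound and, at the level of Theorem \ref{mainresult} itself, matches the paper's: the same difference system with source $-w\,\partial_t^2 u(\alpha_2)$ and null Cauchy data, the same Bukhgeim--Klibanov identity $v_{ttt}(\cdot,0)=-w\,u_2$ realized as the initial datum of the time-differentiated system, the upper bound in \eqref{stabi} via the hidden regularity of Proposition \ref{Regularity} (applied to $v_t$, which is how one reaches the $H^2(0,T;L^2(\Gamma_0))$ norm and where \eqref{regsol} enters), and the lower bound from a one-measurement Carleman estimate with absorption for large $s$ using $|u_2|\geq\eta$. Where you genuinely diverge is the reduction of the third-order operator to the wave operator. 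You use the classical factorization $Lv=(\partial_t+\tfrac{c^2}{b})(\partial_t^2-b\Delta)v+\gamma_1 v_{tt}$, passing to $z=v_t+\tfrac{c^2}{b}v$ and applying the wave Carleman estimate of \cite{baudouin2013global} to $\tilde z=z_t$ (your algebra here is correct, and indeed $\tilde z(\cdot,0)=0$, $\tilde z_t(\cdot,0)=-w\,u_2$). The paper explicitly avoids this decomposition: it proves Theorem \ref{observability} directly for $L$ by expanding $\|Pw-\widehat{\gamma}w_{tt}\|_{s}^2=\|L_0w_t\|_{s}^2+c^4\|L_0w\|_{s}^2+\int\widehat{c^2}e^{s\varphi_{\lambda}}\partial_t|L_0w|^2$ after even reflection, controlling the cross term by the sign of $\partial_t\varphi_{\lambda}$ and the null data $y(\cdot,0)=y_t(\cdot,0)=0$ (inequality \eqref{c2}), and then invoking the wave Carleman for $w$ and $w_t$ separately. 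The paper's route keeps both energies $W(y)$ and $W(y_t)$ on the left of \eqref{1.133} and produces only local residuals ($\|\widehat{\gamma}w_{tt}\|_{s}^2$, directly dominated by $W(y_t)$); your route yields a single genuine wave equation, but at a cost you gloss over.

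That cost is the one soft spot in your write-up: the claim that ``the lower-order contribution coming from $\gamma v_{tt}$ is absorbed into the left-hand side'' is not immediate, because the wave-Carleman left-hand side for $\tilde z$ controls $\tilde z$, $\tilde z_t$, $\nabla\tilde z$ --- not $v_{tt}$ or $v_{ttt}$. You must invert the ODE $v_t+\tfrac{c^2}{b}v=z$ with zero data, e.g.
\begin{align*}
v_{ttt}=\tilde z_t-\frac{c^2}{b}\tilde z+\frac{c^4}{b^2}\int_0^t e^{-\frac{c^2}{b}(t-\tau)}\tilde z(\tau)\,d\tau,
\end{align*}
so the residual contains a nonlocal memory term. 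It is absorbable --- since $\varphi_{\lambda}(x,\cdot)$ decreases in $|t|$, one has $\int e^{2s\varphi_{\lambda}}\left|\int_0^t\tilde z\right|^2\lesssim T^2\int e^{2s\varphi_{\lambda}}|\tilde z|^2$, which the $s^3\lambda^3\varphi_{\lambda}^3$ term swallows for $s$ large --- but this is an extra lemma your proposal must state, not a routine absorption. Likewise, the parity bookkeeping of your reflection deserves care: for the extended equation to hold on $(-T,T)$ the time-independent coefficients must be extended oddly, as the paper does in \eqref{c21}. With those two points made explicit, your argument closes and yields the same estimate \eqref{stabi}.
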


Let us mention some comments about Theorem \ref{mainresult}.

\begin{remark} \label{reg}
The hypothesis $u(\alpha_2)\in H^3(0,T;L^{\infty}(\Omega))$ in Theorem \ref{mainresult} is satisfied if  more regularity is imposed  on the data.
For instance,
taking 
  $m>\frac{N}{2}+1$, 
 it is enough to take  $(u_0,u_1,u_2)\in (H^{m+2}(\Omega)\times H^{m+1}(\Omega)\times H^m(\Omega))$, $\alpha_2\in H^{m-1}(\Omega)$, $f\equiv 0$ and  appropriate boundary compatibility conditions.
Indeed, by Theorem 2.2 in \cite{kaltenbacher2012exponential}, we obtain $$u=u(\alpha_2)\in C([0,T]; H^{m+2}(\Omega))\cap C^1([0,T]; H^{m+1}(\Omega))\cap C^2([0,T]; H^{m}(\Omega)).$$ 
Then, from equation  \eqref{eqmain2} and taking into account that, for $s>\frac{N}{2}$, the Sobolev space $H^{s}(\Omega)$  is an algebra, we have that
$(u_{tt}(\cdot,0), u_{ttt}(\cdot,0), u_{tttt}(\cdot,0))\in (H^{m}(\Omega)\times H^{m-1}(\Omega)\times H^{m-2}(\Omega))$.
 %
Therefore, using again Theorem 2.2 in \cite{kaltenbacher2012exponential}, we deduce that  $u_{tt}\in C([0,T]; H^{m}(\Omega))\cap C^1([0,T]; H^{m-1}(\Omega))\cap C^2([0,T]; H^{m-2}(\Omega))$. 
 Hence 
\begin{align*}
u_{ttt}\in C([0,T];H^{m-1}(\Omega))\cap C^1([0,T];H^{m-2}(\Omega)),
\end{align*}
and using Sobolev's embedding theorem, we get that  $u_{ttt} \in L^2(0,T;L^{\infty}(\Omega))$. 
\end{remark}

\begin{remark}
The inverse problem studied in this paper was previously considered by Liu and Triggiani \cite[Theorem 15.5]{liu2014inverse}. 
They considered $\alpha\in H^{m}(\Omega)$ 
and
initial data $(u_0,u_1,u_2)\in (H^{m+2}(\Omega)\times H^{m+1}(\Omega)\times H^m(\Omega))$ 
with  $m>\frac{N}{2}+2$. By  using  Carleman estimates for a general hyperbolic equation, the authors proved  global uniqueness of any damping coefficient $\alpha$ with boundary measurement given by
\begin{align*}
\frac{c^2}{b}\frac{\partial u}{\partial n}+\frac{\partial u_t}{\partial n},\quad \mbox{on }\Gamma_0\times [0,T],
\end{align*}

and  the initial data is supposed to satisfy  \eqref{mineta} and 
\begin{align}\label{extracondition}
\frac{c^2}{b}u_0(x)+u_1(x)=0, \quad x\in\Omega.
\end{align}
In this paper, using an appropriate  Carleman inequality and the method of Bukhgeim--Klibanov, 
we obtain stability around any regular state, under hypothesis  $m>\frac{N}{2}+1$  and without the additional assumption \eqref{extracondition}.
\end{remark}

\begin{remark}
The hypotheses \eqref{asumb} and \eqref{asumc} on $\Gamma_0$ and $T$  typically arises 
in the study of stability or observability inequalities 
for the wave equation, 
see \cite{ho} where  the multiplier method is used, or 
 \cite{fursikov1996controllability, zhang2000explicit} 
 where  some  observability inequalities are obtained from Carleman estimates.
These hypotheses provide a particular case of the geometric control condition stated in \cite{bardos1992sharp}.
\end{remark}

\begin{remark}
The  assumption of the  positivity for $|u_2|$ appearing in Theorem \ref{mainresult} is classical 
when applying the Bukhgeim-Klibanov method 
and Carleman estimates for inverse problems with only one boundary measurement, see  \cite{baudouin2010lipschitz, liu2011global, yamamoto1999uniqueness}.
\end{remark}

As we mentioned before, in order to study   the stated inverse problem, we use  global Carleman estimates and the method of Bukhgeim--Klibanov, introduced in  \cite{BK}. To state our Carleman estimates precisely, we shall need the following notations.

Assume that $\Gamma_0$ satisfies \eqref{asumb} for some $x_0\in \R^{N}\setminus\overline{\Omega}$. For $\lambda>0$, we define the weight function 
\begin{align}\label{4-SZ}
\varphi_{\lambda}(x,t)=e^{\lambda\phi(x,t)}, \quad (x,t)\in\Omega\times (-T,T),
\end{align}
 where 
\begin{align}\label{5-SZ}
\phi(x,t)=|x-x_0|^2-\beta t^2+M_0 
\end{align}
for some $\beta \in (0,1)$ to be chosen later, and  for some $M_0$  such that 
$ \phi \geq1$, for example any constant satisfying $M_0 \geq \beta T^2 +1$.
To prove  Theorem \ref{mainresult}, we shall use the following Carleman estimate.

\begin{theorem}\label{observability}
Suppose that $\Gamma_0$ and $T$ satisfies \eqref{asumb}, \eqref{asumc}. Let $M>0$ and $\alpha\in \mathcal{A}_{M}$. Let $\beta\in (0,1)$ such that
\begin{align}\label{TC22}
\beta T>\sup_{x\in\Omega} |x-x_0 |.
\end{align}
Then, there exists $s_0>0$, $\lambda>0$ and a positive constant $C$ such that 
\begin{multline}\label{1.133}
\sqrt{s}\int_{\Omega}e^{2s\varphi_{\lambda}(\cdot,0)}|y_{tt}(\cdot,0)|^2dx+s\lambda c^4\int_0^{T}\int_{\Omega}e^{2s\varphi_{\lambda}}\varphi_{\lambda}(|y_{t}|^2+|\nabla y|^2)dxdt\\ +s^3\lambda^3c^4\int_0^{T}\int_{\Omega}e^{2s\varphi_{\lambda}}\varphi_{\lambda}^3|y|^2dxdt  +    s\lambda \int_0^{T}\int_{\Omega}e^{2s\varphi_{\lambda}}\varphi_{\lambda}(|y_{tt}|^2+|\nabla y_{t}|^2)dxdt\\ +s^3\lambda^3\int_0^{T}\int_{\Omega}e^{2s\varphi_{\lambda}}\varphi_{\lambda}^3|y_{t}|^2dxdt
\leq 
C  \int_{0}^{T}\int_{\Omega}e^{2s\varphi_{\lambda} }|Ly|^2dxdt
\\+Cs\lambda\int_{0}^{T}\int_{\Gamma_0}e^{2s\varphi_{\lambda}}\left(|\nabla y_{t}\cdot n|^2+c^4|\nabla y\cdot n|^2\right)d\sigma dt,
\end{multline}
for all $s\geq s_0$ 
and 
for all $y\in L^2(0,T;H_0^1(\Omega))$ satisfying $Ly:=y_{ttt}+ \alpha  y_{tt}-c^2\Delta y -b\Delta y_{t} \in L^2(\Omega\times(0,T))$,  $y(\cdot,0)=y_{t}(\cdot,0)=0$ in $\Omega$, and $y_{tt}(\cdot,0)\in L^2(\Omega)$.
\end{theorem}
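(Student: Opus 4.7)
My plan is to reduce the third-order MGT operator $L$ to a second-order wave operator by a change of unknown. Setting $\gamma := \alpha - c^2/b$, which is nonnegative on $\Omega$ because $\alpha\in \mathcal A_M$, and
\[
w := y_t + \frac{c^2}{b}\,y,
\]
I use $b\Delta w = c^2\Delta y + b\Delta y_t$ and differentiate $w$ twice in time to obtain
\[
w_{tt} - b\Delta w \;=\; Ly - \gamma\, y_{tt}, \qquad w(\cdot,0) = 0, \quad w_t(\cdot,0) = y_{tt}(\cdot,0).
\]
Thus $w$ solves a wave equation of speed $\sqrt{b}$, with vanishing initial position and a source controlled by $Ly$ and by $y_{tt}$. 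I then apply the Carleman estimate from \cite{baudouin2013global} for the wave operator $P_b := \partial_t^2 - b\Delta$ to $w$ (valid with the weight $\varphi_\lambda$ of \eqref{4-SZ}--\eqref{5-SZ} under \eqref{asumb} and \eqref{TC22}), obtaining, for $s,\lambda$ large enough, an estimate whose LHS controls
\[
\sqrt{s}\int_\Omega e^{2s\varphi_\lambda(\cdot,0)}|y_{tt}(\cdot,0)|^2\,dx + s\lambda \iint e^{2s\varphi_\lambda}\varphi_\lambda\bigl(|w_t|^2 + b|\nabla w|^2\bigr) + s^3\lambda^3 \iint e^{2s\varphi_\lambda}\varphi_\lambda^3\,|w|^2,
\]
and whose RHS is $C\iint e^{2s\varphi_\lambda}|Ly - \gamma y_{tt}|^2$ plus a boundary term in $|\nabla w\cdot n|^2$.

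\textbf{Recovering the $y$-terms.} The correspondence $w\mapsto y$ comes from the transport ODE $y_t + (c^2/b)y = w$, $y(\cdot,0)=0$, whose solution is $y(x,t) = \int_0^t e^{-(c^2/b)(t-\sigma)}\, w(x,\sigma)\,d\sigma$. Since the weight $\varphi_\lambda(x,\cdot)$ is nonincreasing on $[0,T]$, a weighted Cauchy--Schwarz in $\sigma$ followed by Fubini yields, for every $k\in\{0,1,3\}$,
\[
\iint e^{2s\varphi_\lambda}\varphi_\lambda^k\,|y|^2\,dx\,dt \;\leq\; \frac{b^2}{c^4}\iint e^{2s\varphi_\lambda}\varphi_\lambda^k\,|w|^2\,dx\,dt,
\]
with the same inequality for $\nabla y$ in place of $y$. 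The $c^4$ factors appearing in the target LHS of \eqref{1.133} are matched precisely by this $b^2/c^4$ loss, so that the $|y|^2$ and $|\nabla y|^2$ contributions of \eqref{1.133} are absorbed into the $|w|^2$ and $|\nabla w|^2$ pieces of the $w$-Carleman LHS (up to $b$-dependent constants). Using the pointwise identities $y_t = w - (c^2/b)y$ and $y_{tt} = w_t - (c^2/b)y_t$, together with $\varphi_\lambda\geq 1$ (from the choice $M_0\geq \beta T^2+1$), the remaining $|y_t|^2$, $|y_{tt}|^2$ and $|\nabla y_t|^2$ contributions of \eqref{1.133} follow from the $|w|^2$, $|w_t|^2$ and $|\nabla w|^2$ terms already on the LHS.

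\textbf{Absorption, boundary splitting, main obstacle.} For the source, the inequality $|Ly - \gamma y_{tt}|^2 \leq 2|Ly|^2 + 2\gamma^2|y_{tt}|^2$ with $\gamma\leq M-c^2/b$ reduces matters to absorbing $\iint e^{2s\varphi_\lambda}|y_{tt}|^2$; since $\varphi_\lambda\geq 1$, this can be compared with $s\lambda\iint e^{2s\varphi_\lambda}\varphi_\lambda|y_{tt}|^2$ already on the LHS, and absorption occurs once $s\lambda$ exceeds an $M$-dependent threshold, fixing $s_0$ and $\lambda_0$. On the boundary, the split $\nabla w\cdot n = \nabla y_t\cdot n + (c^2/b)\nabla y\cdot n$ yields
\[
|\nabla w\cdot n|^2 \leq 2|\nabla y_t\cdot n|^2 + \frac{2c^4}{b^2}|\nabla y\cdot n|^2,
\]
producing exactly the $|\nabla y_t\cdot n|^2$ and $c^4|\nabla y\cdot n|^2$ boundary contributions of \eqref{1.133}. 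The main obstacle is technical rather than conceptual: I must verify that the wave-Carleman estimate of \cite{baudouin2013global} (typically formulated for the unit-speed operator $\partial_t^2-\Delta$) transports to $P_b = \partial_t^2 - b\Delta$ with the weight $\varphi_\lambda$ under just \eqref{asumb}--\eqref{TC22}, which I do either by rescaling $t\mapsto \sqrt{b}\,t$ or by repeating the Baudouin et al.\ Carleman computation with $b$ tracked explicitly, and I must carefully bookkeep $c$- and $b$-dependent constants so that the $c^4$ factors of \eqref{1.133} emerge exactly. The structural input that makes the source absorption work in the first place is the sign condition $\gamma\geq 0$ imposed by the admissibility set $\mathcal A_M$.
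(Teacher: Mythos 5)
Your proposal is correct in substance, but it follows a genuinely different route from the paper --- in fact, precisely the ``classical decomposition'' of the MGT operator that the authors explicitly say they avoid. You set $w=y_t+\frac{c^2}{b}y$, so that $w_{tt}-b\Delta w=Ly-\gamma y_{tt}$ with $w(\cdot,0)=0$ and $w_t(\cdot,0)=y_{tt}(\cdot,0)$, apply the wave Carleman estimate of Baudouin--de Buhan--Ervedoza \emph{once}, and recover every $y$-quantity from the damped transport ODE $y_t+\frac{c^2}{b}y=w$, $y(\cdot,0)=0$, exploiting that $t\mapsto\varphi_\lambda(x,t)$ is nonincreasing on $[0,T]$; your weighted Cauchy--Schwarz/Fubini bound $\int_0^T\!\int_\Omega e^{2s\varphi_\lambda}\varphi_\lambda^k|y|^2\,dx\,dt\le \frac{b^2}{c^4}\int_0^T\!\int_\Omega e^{2s\varphi_\lambda}\varphi_\lambda^k|w|^2\,dx\,dt$ is correct and neatly explains where the $c^4$ weights in \eqref{1.133} come from. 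The paper instead keeps the unknown $y$: it takes the even extension $w=\widetilde y$ to $(-T,T)$ (legitimate because $y(\cdot,0)=y_t(\cdot,0)=0$), expands the weighted square $\|L_0w_t+\widehat{c^2}L_0w\|_s^2$, and uses the sign of $\partial_t\varphi_\lambda$ together with the oddness of $\widehat{c^2}$ to bound the cross term $\int\widehat{c^2}e^{2s\varphi_\lambda}\partial_t|L_0w|^2$ from below by $-2c^2\int_\Omega e^{2s\varphi_\lambda(\cdot,0)}|y_2|^2\,dx$; this controls $\|L_0w\|_s$ and $\|L_0w_t\|_s$ simultaneously, after which the wave Carleman is applied \emph{twice} (to $w$ and to $w_t$), and the terms $\|\widehat{\gamma}w_{tt}\|_s^2$ and $\int e^{2s\varphi_\lambda(\cdot,0)}|y_2|^2$ are absorbed for $s$ large. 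Your route buys a single Carleman application and dispenses with the cross-term sign trick and the odd-extended coefficients $\widehat{c^2},\widehat{\gamma}$; the paper's route buys direct control of the full Carleman energy of $y_t$ (including $|w_{tt}|^2$, which is what absorbs the $\gamma w_{tt}$ source), whereas you must rewrite $y_{tt}=w_t-\frac{c^2}{b}w+\frac{c^4}{b^2}y$ to achieve the same absorption --- which does work, since all three pieces sit on your left-hand side with $\varphi_\lambda\ge1$ and only cost a larger $s_0$.

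Two details to repair, neither fatal. First, the Carleman estimate you invoke (Theorem 2.10 of \cite{baudouin2013global}, as the paper uses it) lives on the symmetric interval $(-T,T)$ for functions vanishing at $t=0$; since $w_t(\cdot,0)=y_{tt}(\cdot,0)\neq 0$ in general, the \emph{even} extension of $w$ would create a Dirac mass in $w_{tt}$ at $t=0$, so you must take the \emph{odd} extension (admissible because $w(\cdot,0)=0$), which solves $\partial_t^2\widetilde w-b\Delta\widetilde w=$ (odd extension of $Ly-\gamma y_{tt}$) in $L^2(\Omega\times(-T,T))$ and produces exactly the trace term $\sqrt{s}\int_\Omega e^{2s\varphi_\lambda(\cdot,0)}|y_{tt}(\cdot,0)|^2\,dx$; you should also record, as the paper does, that $\partial_t^2 w - b\Delta w\in L^2$ plus hidden regularity for the wave equation makes the boundary terms meaningful. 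Second, your closing claim that the sign condition $\gamma\ge 0$ is the structural input enabling the source absorption is inaccurate: the absorption of $\gamma^2|y_{tt}|^2$ uses only $\|\gamma\|_{L^\infty}\le M$ (the paper likewise uses only boundedness of $\widehat{\gamma}$ at this step); nonnegativity of $\gamma$ matters for the energy estimates and well-posedness framework, not for the Carleman inequality. Your concern about transporting the estimate from $\partial_t^2-\Delta$ to $\partial_t^2-b\Delta$ is legitimate but is resolved by the same normalization the paper itself adopts ($b=1$ without loss of generality).
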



Let us mention that, in order to obtain estimate 
\eqref{1.133}, we do not follow the classical procedure  of 
decomposing the differential  operator $Ly$ of  the MGT system. 
Instead of  that, we 
use  in an appropriate way the well-known Carleman estimate for the wave operator, 
from which we are able to obtain \eqref{1.133} thanks to the fact that we are asking  that the initial conditions  $y(\cdot, 0)$ and $y_t(\cdot, 0)$ are null.
For instance, this result is not enough to obtain controllability, but this is coherent with the fact 
 the MGT equation has poor control properties: 
 in \cite{lizama2019controllability}
 is proved that the interior  null controllability of this system is not true, 
  and then,  the boundary  null controllability is also false. 
A similar idea  was considered in \cite{yamamoto2003one},
where a Carleman estimate for the Laplace operator 
 was used to prove the unique continuation property for a linearized Benjamin--Bona--Mahony equation. 

The  Bukhgeim--Klibanov method and Carleman estimates  have been widely used for  obtaining stability of coefficients with  one-measurement observations. 
Concerning inverse problems for  wave  equations  with boundary observations, in \cite{PuelYama}  is studied the problem of recovering a source term of the equation,   \cite{Yama99} 
deals with the problem of recovering a coefficient of the zero-order term, and  \cite{Bella04}  concerns   the recovering of the main coefficient. 
 In addition, we can mention the works \cite{MR1964256, MR3774702} related to coefficient inverse problems for hyperbolic equations.
We refer to  \cite{BellaYama} for an  account of classic and recent results concerning the use of Carleman estimates 
on the  study of inverse problems for hyperbolic equations.

\section{Auxiliary results}\label{s2}
In this section, we state and prove some  auxiliary  results concerning  estimates for the Laplacian of a solution of \eqref{eqmain2}  and a hidden regularity estimate 
for the solution of the MGT equation.

\subsection{Bound of Laplacian of the solutions}
From now, throughout the article, we  define  
\begin{align} \label{gamma}
\gamma(x):=\alpha(x)-\frac{c^2}{b}.
\end{align}
 Let us note that $\alpha\in \mathcal{A}_{M}$ if and only if 
$0 \leq \gamma \leq M$ in 
$\overline{\Omega}$.
%
We also define the energy
\begin{align}\label{1.3}
E_e(y):=&\frac{b}{2}\|\nabla y\|_{L^2(\Omega)}^2+\frac{1}{2}\|y_t\|_{L^2(\Omega)}^2.
\end{align}
In order to prove our main results, some technical estimations are necessary. One of them is the following:
\begin{lemma}\label{Energy}
Let $b>0$ and $M>0$ such that $\alpha\in\mathcal{A}_{M}$. 
Then  there exists $C>0$ such that the total energy 
\begin{align} \label{defener}
\overline{E}(t):=E_e(u_t(t))+E_e(u(t)),
\end{align} 
 satisfies
\begin{align*}
\overline{E}(t)\leq C\left(\overline{E}(0)+\|f\|^2_{L^2(0,T:L^2(\Omega))}\right),\quad t\in [0,T],
\end{align*}
 for every $(u_0,u_1,u_2)\in (H^2(\Omega)\cap H_0^1(\Omega))\times H_0^1(\Omega)\times L^2(\Omega)$ and $f\in L^2(0,T;L^2(\Omega))$, where  $u$ be the unique solution of \eqref{eqmain2}. 
\end{lemma}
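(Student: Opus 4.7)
The plan is to reduce the third-order MGT equation to a wave equation for an auxiliary variable and then apply classical energy estimates combined with Gronwall's inequality. Setting $v := u_t + (c^2/b)u$, a direct substitution using $\gamma = \alpha - c^2/b$ from \eqref{gamma} reveals that $v$ solves the wave equation
\begin{equation*}
v_{tt} - b\Delta v = f - \gamma u_{tt} \quad \text{in } \Omega\times(0,T),
\end{equation*}
with homogeneous Dirichlet boundary conditions and with initial data $v(0) = u_1 + (c^2/b)u_0 \in H_0^1(\Omega)$ and $v_t(0) = u_2 + (c^2/b) u_1 \in L^2(\Omega)$, both controlled by the initial data prescribed in \eqref{eqmain2}.

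Multiplying this wave equation by $v_t$, integrating by parts over $\Omega$, and applying Cauchy--Schwarz together with $\|\gamma\|_{L^\infty} \leq M + c^2/b$ (which holds since $\alpha \in \mathcal{A}_M$), yields
\begin{equation*}
\frac{d}{dt} E_e(v) \leq C\bigl( E_e(v) + \|u_{tt}\|_{L^2(\Omega)}^2 + \|f\|_{L^2(\Omega)}^2\bigr),
\end{equation*}
for a constant $C$ depending only on $b$, $c$, and $M$. Independently, a standard multiplication of $u_{tt}$ in the identity $\frac{d}{dt}E_e(u) = \int_\Omega u_t u_{tt} + b\nabla u\cdot\nabla u_t \,dx$ gives $\frac{d}{dt} E_e(u) \leq C(E_e(u) + \|u_{tt}\|_{L^2(\Omega)}^2)$. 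The coupling closes upon noticing that $u_{tt} = v_t - (c^2/b)u_t$, so that $\|u_{tt}\|_{L^2(\Omega)}^2 \leq C(E_e(v) + E_e(u))$.

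Adding the two inequalities produces
\begin{equation*}
\frac{d}{dt}\bigl(E_e(u) + E_e(v)\bigr) \leq C\bigl( E_e(u) + E_e(v) + \|f\|_{L^2(\Omega)}^2\bigr),
\end{equation*}
from which Gronwall's inequality gives $E_e(u)(t) + E_e(v)(t) \leq C(E_e(u(0)) + E_e(v(0)) + \|f\|^2_{L^2(0,T;L^2(\Omega))})$ for all $t \in [0,T]$. Since $u_t = v - (c^2/b) u$ and $u_{tt} = v_t - (c^2/b) u_t$, the triangle inequality shows that the auxiliary energy $E_e(u) + E_e(v)$ is equivalent to $\overline{E}$ defined in \eqref{defener}, with constants depending only on $b$ and $c$; hence the estimate transfers to $\overline{E}$ and $E_e(v(0))$ is in turn bounded by a multiple of $\overline{E}(0)$.

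The main obstacle I foresee is that the source term $\gamma u_{tt}$ in the wave equation for $v$ is \emph{not} a lower-order perturbation in the classical sense: it has the same differential order as $v_t$. This prevents a direct application of the standard wave-equation energy estimate and forces the coupled Gronwall argument described above. Notice that the sign condition $\gamma \geq 0$ coming from the assumption $\alpha \in \mathcal{A}_M$ plays no role here, though it will be crucial in the Carleman analysis required for Theorem \ref{observability}.
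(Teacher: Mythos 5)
Your reduction coincides with the paper's: the paper (after setting $b=1$) multiplies the equation by $u_{tt}+c^2u_t=\partial_t(u_t+c^2u)$, i.e., it works with exactly your auxiliary variable $v=u_t+(c^2/b)u$ solving $v_{tt}-b\Delta v=f-\gamma u_{tt}$. Where you genuinely diverge is in how the term $\gamma u_{tt}$ is handled and how $\overline{E}$ is extracted. The paper exploits the sign $\gamma\geq 0$ built into $\mathcal{A}_M$: it keeps $\int_\Omega \gamma u_{tt}(u_{tt}+c^2u_t)\,dx$ exactly, discards $\int_\Omega\gamma |u_{tt}|^2\,dx\geq 0$, and absorbs $c^2\int_\Omega\gamma u_{tt}u_t\,dx=\frac{c^2}{2}\frac{d}{dt}\|\gamma^{1/2}u_t\|_{L^2(\Omega)}^2$ into a modified energy, so Gronwall closes for $E_e(v)$ alone (see \eqref{eqEner01}); it then recovers $E_e(u)$ and $E_e(u_t)$ separately from the identity \eqref{1-SZ}, $E_e(v)=E_e(u_t)+c^4E_e(u)+c^2\frac{d}{dt}E_e(u)$, first integrating in time to bound $E_e(u)$ and then using an $\varepsilon$-absorption to bound $E_e(u_t)$. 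You instead estimate $\left|\int_\Omega\gamma u_{tt}v_t\,dx\right|\leq\|\gamma\|_{L^\infty(\Omega)}\|u_{tt}\|_{L^2(\Omega)}\|v_t\|_{L^2(\Omega)}$ and close a coupled Gronwall inequality for $E_e(u)+E_e(v)$, which is equivalent to $\overline{E}$ of \eqref{defener} (and indeed without any Poincar\'e inequality, since all your identities involve only $\nabla u$, $\nabla u_t$, $u_t$, $u_{tt}$). Your route is more elementary and, as you correctly observe, never uses $\gamma\geq 0$, so it actually proves the lemma for $\gamma\in L^\infty(\Omega)$ of arbitrary sign; the price is a constant of the form $e^{CT}$ with $C$ proportional to $\|\gamma\|_{L^\infty(\Omega)}$, whereas the paper's argument is tailored to the structure of $\mathcal{A}_M$.

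One intermediate inequality is mis-stated, although your final summed inequality is correct. From $\frac{d}{dt}E_e(u)=\int_\Omega\left(u_tu_{tt}+b\nabla u\cdot\nabla u_t\right)dx$ you claim $\frac{d}{dt}E_e(u)\leq C\left(E_e(u)+\|u_{tt}\|_{L^2(\Omega)}^2\right)$; this is false as written, because $b\int_\Omega\nabla u\cdot\nabla u_t\,dx$ requires control of $\|\nabla u_t\|_{L^2(\Omega)}^2$, which is not dominated by $E_e(u)+\|u_{tt}\|_{L^2(\Omega)}^2$. The repair is the gradient analogue of the substitution you already use for $u_{tt}$: from $\nabla u_t=\nabla v-(c^2/b)\nabla u$ one gets $\|\nabla u_t\|_{L^2(\Omega)}^2\leq C\left(E_e(v)+E_e(u)\right)$, hence $\frac{d}{dt}E_e(u)\leq C\left(E_e(u)+E_e(v)\right)$, and the coupled inequality $\frac{d}{dt}\left(E_e(u)+E_e(v)\right)\leq C\left(E_e(u)+E_e(v)+\|f\|_{L^2(\Omega)}^2\right)$ holds exactly as you wrote it, so the rest of your argument goes through unchanged.
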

\begin{proof}
Without loss of generality, we assume that $b=1$. Then, the equation
$$u_{ttt}+\alpha(x) u_{tt}-c^2 \Delta u-b\Delta u_t=f$$
can be write as follows (recall the definition of $\gamma$ in \eqref{gamma})
\begin{align}\label{1.6}
Lu:=L_0u_t+c^2L_0u+\gamma(x)u_{tt}=f,
\end{align}
where $L_0$ is the wave operator given by 
\begin{align}\label{1.7}
L_0:=\partial_{t}^2-\Delta.
\end{align}

Let us multiply the equation \eqref{1.6} by $u_{tt}(t)+c^2 u_t(t) \in L^2(\Omega)$ and after integrating on $\Omega$, we deduce that

\begin{align*}
\frac{d}{dt}E_e(u_t+c^2u)+\int_\Omega\gamma u_{tt}(u_{tt}+c^2u_t)=\int_\Omega f(u_{tt}+c^2u_t),
\end{align*}
thus, we have 
\begin{align*}
\frac{d}{dt}E_e(u_t+c^2u)+\frac{c^2}{2}\frac{d}{dt}\|\gamma^{1/2}u_{t}\|^2_{L^2(\Omega)} \leq \frac{1}{2}\|f\|^2_{L^2(\Omega)}+E_e(u_t+c^2 u). 
\end{align*}
And using Gronwall's inequality, there exists a constant $C>0$, such that 
\begin{align}\label{eqEner01}
E_e(u_t+c^2u)+\frac{c^2}{2}\|\gamma^{1/2}u_{t}\|^2_{L^2(\Omega)} \leq C (\|f\|^2_{L^2(0,T;L^2(\Omega))}+\overline{E}(0)),\;\forall t\in [0,T].
\end{align}
On other side, a direct computation give us 
\begin{align}\label{1-SZ}
E_e(u_t+c^2u)=E_e(u_t)+c^4E_e(u)+c^2\frac{d}{dt}E_e(u),
\end{align}
and replacing \eqref{1-SZ} in \eqref{eqEner01}, we have 
  \begin{align*}
c^2\frac{d}{dt}E_e(u)\leq C (\|f\|^2_{L^2(0,T;L^2(\Omega))}+\overline{E}(0)).
\end{align*}
Hence, integrating we obtain that, there exists a constant $C>0$, such that 
\begin{align}\label{eqEner02}
E_e(u)\leq C (\|f\|^2_{L^2(0,T;L^2(\Omega))}+\overline{E}(0)),\;\;\;\forall t\in [0,T].
\end{align}
Finally, if we take $\varepsilon<1$, we observe that 
\begin{align}\label{2-SZ}
c^2\frac{d}{dt}E_e(u)=c^2\int_\Omega (u_{tt}u_t+\nabla u\cdot\nabla u_t)\geq -\varepsilon^2 E_e(u_t)-\frac{c^4}{\varepsilon^2}E_e(u),
\end{align}
replacing \eqref{2-SZ} in \eqref{eqEner01} and using \eqref{eqEner02}, we obtain that, there exists a constant $C>0$, such that, 
\begin{align}\label{eqEnerFinal}
E_e(u_t) \leq C (\|f\|^2_{L^2(0,T;L^2(\Omega))}+\overline{E}(0)),\;\forall t\in [0,T],
\end{align}
which together with \eqref{eqEner02}, we can conclude the proof. 

\end{proof}

\begin{lemma}\label{PropLaplaciano}
Let $b=1$ and $M>0$ such that  $\alpha\in\mathcal{A}_{M}$. Let $(u,u_{t},u_{tt})$ be the unique solution of \eqref{eqmain2} with data $(u_0,u_1,u_2)\in (H^2(\Omega)\cap H_0^1(\Omega))\times H_0^1(\Omega)\times L^2(\Omega) $ and $f\in L^2(0,T; L^2(\Omega))$. Then, the term $\Delta u(t)$ can be bounded as follows
\begin{align*}
\|\Delta u(t)\|_{L^2(\Omega)}^2\leq  C \left(\|f\|^2_{L^2(0,T;L^2(\Omega))}+\overline{E}(0)+\|\Delta u_0\|_{L^2(\Omega)}^2\right),\;\forall t\in[0,T].
\end{align*}
\end{lemma}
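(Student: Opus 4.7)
The plan is to integrate the MGT equation in time once in order to trade the third-order derivative $u_{ttt}$ for $u_{tt}$ and, crucially, to trade $\Delta u_t$ for $\Delta u$ itself. Then I will combine with Lemma \ref{Energy} and close the estimate by Gronwall's inequality.

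More precisely, since $\alpha=\alpha(x)$ does not depend on time, integrating the equation in \eqref{eqmain2} from $0$ to $t$ gives
\begin{align*}
u_{tt}(t)-u_2+\alpha(u_t(t)-u_1)-c^2\int_0^t \Delta u(s)\,ds-\bigl(\Delta u(t)-\Delta u_0\bigr)=\int_0^t f(s)\,ds,
\end{align*}
so that
\begin{align*}
\Delta u(t)=\Delta u_0+u_{tt}(t)-u_2+\alpha u_t(t)-\alpha u_1-c^2\int_0^t \Delta u(s)\,ds-\int_0^t f(s)\,ds.
\end{align*}
Taking $L^2(\Omega)$-norms, using Cauchy--Schwarz in time on the two integral remainders and $\|\alpha\|_{L^\infty}\le M$, I obtain an inequality of the form
\begin{align*}
\|\Delta u(t)\|_{L^2(\Omega)}^2\le C\Bigl(\|\Delta u_0\|_{L^2}^2+\|u_{tt}(t)\|_{L^2}^2+\|u_t(t)\|_{L^2}^2+\overline E(0)+\|f\|_{L^2(0,T;L^2)}^2+\int_0^t\|\Delta u(s)\|_{L^2}^2\,ds\Bigr),
\end{align*}
where the constant $C$ depends only on $M,c,T$.

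The next step is to absorb the $u_{tt}(t)$ and $u_t(t)$ terms using Lemma \ref{Energy}. Since $E_e(u_t(t))$ controls $\|u_{tt}(t)\|_{L^2}^2$ and $E_e(u(t))$ controls $\|u_t(t)\|_{L^2}^2$, that lemma yields
\begin{align*}
\|u_{tt}(t)\|_{L^2}^2+\|u_t(t)\|_{L^2}^2\le 2\,\overline E(t)\le C\bigl(\overline E(0)+\|f\|_{L^2(0,T;L^2)}^2\bigr),
\end{align*}
and the previous inequality simplifies to
\begin{align*}
\|\Delta u(t)\|_{L^2(\Omega)}^2\le C\bigl(\|\Delta u_0\|_{L^2}^2+\overline E(0)+\|f\|_{L^2(0,T;L^2)}^2\bigr)+C\int_0^t\|\Delta u(s)\|_{L^2}^2\,ds.
\end{align*}
Applying Gronwall's lemma on $[0,T]$ then delivers the claimed bound.

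No step looks delicate in isolation: the only point that might be considered an obstacle is the observation that, despite the third-order time derivative in the MGT operator, integrating once in $t$ is enough to bring every term to an order already controlled by the basic energy $\overline E$ of Lemma \ref{Energy}, and at the same time to convert $\Delta u_t$ into $\Delta u(t)-\Delta u_0$ so that the left-hand side appears naturally. After this algebraic reduction, the bookkeeping is routine.
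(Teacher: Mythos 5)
Your proof is correct, but it takes a genuinely different route from the paper's. The paper exploits the factorization $Lu=\partial_t(L_0u)+c^2L_0u+\gamma u_{tt}$ with $L_0=\partial_t^2-\Delta$: multiplying the equation by $L_0u$ yields the identity $\frac{d}{dt}\|L_0u(t)\|_{L^2(\Omega)}^2+2c^2\|L_0u(t)\|_{L^2(\Omega)}^2=2\langle f-\gamma u_{tt},L_0u\rangle_{L^2(\Omega)}$, i.e.\ it treats $L_0u$ as the unknown of a damped first-order ODE in time; after Young's inequality and a direct integration, $\|L_0u(t)\|_{L^2(\Omega)}^2$ is bounded by its initial value $\|u_2-\Delta u_0\|_{L^2(\Omega)}^2$ plus $\|f-\gamma u_{tt}\|_{L^2(0,T;L^2(\Omega))}^2$, the latter controlled by Lemma \ref{Energy} (which contains $\|u_{tt}\|_{L^2(\Omega)}^2$ through $E_e(u_t)$), and the conclusion follows from $\|\Delta u(t)\|\le\|L_0u(t)\|+\|u_{tt}(t)\|$ --- no Gronwall is needed at this stage. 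You instead integrate the equation once in time, which converts $\Delta u_t$ into $\Delta u(t)-\Delta u_0$ and lowers every remaining term to the order already controlled by $\overline{E}$, then close with Gronwall. Both arguments rest on the same two pillars: Lemma \ref{Energy} to control $\|u_{tt}(t)\|$ and $\|u_t(t)\|$, and the time-independence of $\alpha$ --- in your case to compute $\int_0^t\alpha u_{tt}\,ds=\alpha(u_t(t)-u_1)$, in the paper's case to keep $\gamma u_{tt}$ a zero-order perturbation of the ODE for $L_0u$. What the paper's route buys is structural insight (the dissipative ODE satisfied by $L_0u$ is the same observation underlying the MGT semigroup decomposition) and it avoids Gronwall; what yours buys is elementarity: no pairing with $L_0u$ (whose rigorous justification requires $\langle\partial_t(L_0u),L_0u\rangle=\frac{1}{2}\frac{d}{dt}\|L_0u\|^2$ at the stated weak regularity, a step the paper leaves implicit), only a term-by-term time integration valid in $L^2(\Omega)$, plus Cauchy--Schwarz. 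Two small points are worth making explicit in your write-up: Gronwall applies because $t\mapsto\|\Delta u(t)\|_{L^2(\Omega)}^2$ is continuous on $[0,T]$ (from $u\in C([0,T];H^2(\Omega)\cap H_0^1(\Omega))$), and the data terms are absorbed as you claim since $\|u_1\|_{L^2(\Omega)}^2\le 2E_e(u)(0)$ and $\|u_2\|_{L^2(\Omega)}^2\le 2E_e(u_t)(0)$, both bounded by $2\overline{E}(0)$.
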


\begin{proof}

Since the term $u_{tt}(t),\Delta u(t) \in L^2(\Omega)$, let us multiply the equation \eqref{1.6} by $L_0u$ and after integrating on $\Omega$, we deduce that
\begin{align}\label{1.61}
\D\frac{d}{dt}\|L_0u(t)\|_{L^2(\Omega)}^2+2c^2\|L_0u(t)\|_{L^2(\Omega)}^2=2\langle f(t)-\gamma u_{tt}(t),L_0u(t)\rangle_{L^2(\Omega)}.
\end{align}
%
%
By standard argument, from \eqref{1.61} we immediately obtain
\begin{align}\label{1.62}
\D\frac{d}{dt}\|L_0u(t)\|_{L^2(\Omega)}^2\leq \frac{1}{c^2}\|f(t)-\gamma u_{tt}(t)\|^2_{L^2(\Omega)}.
\end{align}

Integrating \eqref{1.62} from $0$ to $t>0$, we obtain that
\begin{align*}
\|L_0u(t)\|_{L^2(\Omega)}^2\Big|_{0}^{t}\leq \frac{1}{c^2}\int_0^{t}\|f(\tau)-\gamma u_{tt}(\tau)\|^2_{L^2(\Omega)} d\tau.
\end{align*}
Then, we have
\begin{multline*}
\|L_0u(t)\|_{L^2(\Omega)}^2-\|L_0u(t)\|_{L^2(\Omega)}^2\Big|_{t=0}\leq 
\frac{2}{c^2}\int_0^{t}\|f(\tau)\|^2_{L^2(\Omega)}d\tau+\\\frac{2}{c^2}\int_0^{t}\|\gamma u_{tt}(\tau)\|^2_{L^2(\Omega)}d\tau,
\end{multline*}
and then using Theorem \ref{Energy} we obtain the desired estimate.

\end{proof}

\subsection{Hidden regularity}

We can observe that the inverse problem considered in this paper needs that the normal derivative of the solution can be defined on the boundary. It is well known that the wave equation satisfies certain extra regularity called \emph{hidden regularity} \cite{lions1988controlabilite}. 
it is natural
to expect an analogous  result for  the  Moore--Gibson--Thompson equation, due its  hyperbolic nature  \cite{kaltenbacher2011wellposedness}.
In the following result, using the  multiplier method,  we obtain a hidden regularity for the solutions of this equation.

\begin{proposition}\label{Regularity}
The unique solution $(u,u_{t},u_{tt})\in C([0,T];(H^2(\Omega)\cap H_0^1(\Omega))\times H_0^1(\Omega)\times L^2(\Omega))$ of \eqref{eqmain2} satisfies 
\begin{align}\label{1.5}
\D\frac{\partial u}{\partial n}\in H^1(0,T;L^2(\Gamma)).
\end{align}
Moreover, the normal derivative satisfies
\begin{multline}\label{1.51}
\left\|\frac{\partial u}{\partial n}\right\|_{H^1(0,T;L^2(\Gamma))}^2\leq C\Big(\|u_0\|_{H^2(\Omega)\cap H_0^1(\Omega)}^2+\|u_1\|_{H_0^1(\Omega)}^2+\|u_2\|_{L^2(\Omega)}^2\\+\|f\|_{L^2(0,T;L^2(\Omega))}^2\Big).
\end{multline}
Consequently, the mapping 
\begin{align*}
(f,u_0,u_1,u_2)\mapsto \frac{\partial u}{\partial n}
\end{align*}
is linear continuous from $L^2(0,T;L^2(\Omega))\times (H^2(\Omega)\cap H_0^1(\Omega))\times H_0^1(\Omega)\times L^2(\Omega))$ into $H^1(0,T;L^2(\Gamma))$.
\end{proposition}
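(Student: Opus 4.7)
My plan is to reduce the proposition to the classical hidden regularity for the wave equation, applied to the auxiliary variable $v:=u_t+c^2 u$ that already appears in \eqref{1.6} (we assume $b=1$ as in the paper), and then to transfer the boundary information back to $u$ via a scalar linear ODE in time along $\Gamma$.

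A direct computation using \eqref{1.6} shows that $v$ solves the wave equation $v_{tt}-\Delta v = f-\gamma u_{tt}$ in $\Omega\times(0,T)$, with Dirichlet boundary condition $v=0$ on $\Gamma\times(0,T)$ (since $u|_\Gamma\equiv 0$ forces $u_t|_\Gamma\equiv 0$), and initial data $v(0)=u_1+c^2 u_0\in H_0^1(\Omega)$, $v_t(0)=u_2+c^2 u_1\in L^2(\Omega)$. By Lemma \ref{Energy}, $\|u_{tt}\|_{L^2(0,T;L^2(\Omega))}^2$ is controlled by $\overline{E}(0)+\|f\|_{L^2(0,T;L^2(\Omega))}^2$, so the source $f-\gamma u_{tt}$ lies in $L^2(0,T;L^2(\Omega))$ with a bound in terms of the data. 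The classical Lions hidden regularity for the wave equation, obtained via the multiplier method using a smooth vector field coinciding with $n$ on $\Gamma$ (see \cite{lions1988controlabilite}), then yields $\partial v/\partial n\in L^2(0,T;L^2(\Gamma))$ together with
\[
\left\|\frac{\partial v}{\partial n}\right\|_{L^2(0,T;L^2(\Gamma))}^2\leq C\bigl(\|u_0\|_{H^2(\Omega)\cap H_0^1(\Omega)}^2+\|u_1\|_{H_0^1(\Omega)}^2+\|u_2\|_{L^2(\Omega)}^2+\|f\|_{L^2(0,T;L^2(\Omega))}^2\bigr).
\]

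To recover $\partial u/\partial n$, observe that since $u\equiv 0$ on $\Gamma$, taking the normal derivative in $v=u_t+c^2 u$ at the boundary yields, for a.e.\ $x\in\Gamma$, the linear scalar ODE in $t$
\[
\frac{d}{dt}\frac{\partial u}{\partial n}+c^2\frac{\partial u}{\partial n}=\frac{\partial v}{\partial n},\qquad \left.\frac{\partial u}{\partial n}\right|_{t=0}=\frac{\partial u_0}{\partial n},
\]
valued in $L^2(\Gamma)$, where $\partial u_0/\partial n\in L^2(\Gamma)$ by the trace theorem applied to $u_0\in H^2(\Omega)$. The variation-of-constants formula gives $\partial u/\partial n\in C([0,T];L^2(\Gamma))$ with its norm controlled by $\|\partial u_0/\partial n\|_{L^2(\Gamma)}$ and $\|\partial v/\partial n\|_{L^2(0,T;L^2(\Gamma))}$, and then reading the ODE itself shows $\partial u_t/\partial n=\partial v/\partial n-c^2\partial u/\partial n\in L^2(0,T;L^2(\Gamma))$. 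Combining both bounds yields $\partial u/\partial n\in H^1(0,T;L^2(\Gamma))$ together with the estimate \eqref{1.51}, and linear continuity of the data-to-trace map is immediate from the linearity of the construction.

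The main delicate point is that the boundary ODE must be justified in $L^2(\Gamma)$ at the level of the weak solution, since the regularity \eqref{dat} does not a priori allow one to take the trace of $\nabla u_t$ on $\Gamma$. I would handle this by a standard density argument: first approximate the initial data by sequences in $(H^{m+2}\cap H_0^1)\times H^{m+1}\times H^m$ with $m$ as in Remark \ref{reg}, for which the corresponding solutions are classical and the computation above is unambiguous, establish \eqref{1.51} for this approximating family, and finally pass to the limit using the continuous dependence provided by \cite[Theorem~2.2]{kaltenbacher2012exponential}.
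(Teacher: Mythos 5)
Your proof is correct, but it takes a genuinely different route from the paper's. The paper proves Proposition \ref{Regularity} by running the multiplier method directly on the MGT system: it multiplies $L_0u$ by $m\cdot\nabla u$ and $L_0u_t$ by $m\cdot\nabla u_t$ (with $m$ a lifting of the outward normal), substitutes $L_0u_t=f-c^2L_0u-\gamma u_{tt}$ from the equation, and controls the resulting interior terms using Lemma \ref{Energy} \emph{together with} Lemma \ref{PropLaplaciano} --- the bound on $\|L_0u(t)\|_{L^2(\Omega)}$, i.e.\ on $\Delta u$, is precisely what makes those interior terms tractable --- thereby producing the two boundary terms $|\nabla u\cdot n|^2$ and $|\nabla u_t\cdot n|^2$ simultaneously. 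You instead collapse the system to a single Dirichlet wave equation for $v=u_t+c^2u$ with source $f-\gamma u_{tt}\in L^2(0,T;L^2(\Omega))$ (controlled by Lemma \ref{Energy} alone), quote the classical Lions hidden regularity as a black box for $\partial v/\partial n$, and then disentangle $\partial u/\partial n$ and $\partial u_t/\partial n$ through the boundary ODE $\partial_t\bigl(\partial u/\partial n\bigr)+c^2\,\partial u/\partial n=\partial v/\partial n$; it is worth noting that $\partial v/\partial n$ is exactly the combined measurement $\frac{c^2}{b}\frac{\partial u}{\partial n}+\frac{\partial u_t}{\partial n}$ of Liu--Triggiani discussed in the remark following Theorem \ref{mainresult}, so your ODE step is precisely what separates the two traces, at the modest cost of requiring $\partial u_0/\partial n\in L^2(\Gamma)$, which the hypothesis $u_0\in H^2(\Omega)$ supplies. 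Your route buys economy (no re-derivation of multiplier identities, no need for Lemma \ref{PropLaplaciano}) in exchange for the ODE step and the density argument needed to justify the boundary identity; on that last point you are in fact more careful than the paper, whose own computation integrates by parts on $m\cdot\nabla u_t$ although the weak solution only has $u_t\in C([0,T];H_0^1(\Omega))$, so a regularization-and-passage-to-the-limit step of the kind you describe (using linearity and the continuous dependence of \cite[Theorem~2.2]{kaltenbacher2012exponential}) is implicitly required there as well. Both arguments deliver the same estimate \eqref{1.51}, with constants depending on $M$ through the energy lemma in either case.
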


\begin{proof}
We use the multiplier method for the proof. Let $m\in W^{1,\infty}(\Omega;\R^{N})$ and let us multiply $L_0u$ by $m\nabla u$ and $L_0(u_t)$ by $m\nabla u_{t}$. Using the summation convention for repeated index, we obtain, respectively

\begin{multline}
\D
\int_{0}^{T}\int_{\Omega}L_0(u_{t})m\nabla u_{t} dxdt=\frac{1}{2}\int_0^{T}\int_{\Omega}\mbox{div}(m)| u_{tt}|^2dxdt +\int_{\Omega}u_{tt}m\nabla u_{t}\Big|_0^{T}dx\\
+\int_0^{T}\int_{\Omega}\frac{\partial u_{t}}{\partial x_{i}}\frac{\partial m_{j}}{\partial x_{i}}\frac{\partial u_{t}}{\partial x_{j}}dxdt
-\frac{1}{2}\int_0^{T}\int_{\Omega}\mbox{div}(m)|\nabla u_{t}|^2dxdr
-\frac{1}{2}\int_0^{T}\int_{\partial\Omega}|\nabla u_{t}\cdot n|^2(m\cdot n)d\sigma dt.
\end{multline}

and
\begin{multline}
\D
 \int_{0}^{T}\int_{\Omega}L_0um\nabla u dxdt=\frac{1}{2}\int_0^{T}\int_{\Omega}\mbox{div}(m)| u_{t}|^2dxdt+\int_{\Omega}u_{t}m\nabla u\Big|_0^{T}dx\\ 
+\int_0^{T}\int_{\Omega}\frac{\partial u}{\partial x_{i}}\frac{\partial m_{j}}{\partial x_{i}}\frac{\partial u}{\partial x_{j}}dxdt
-\frac{1}{2}\int_0^{T}\int_{\Omega}\mbox{div}(m)|\nabla u|^2dxdt
-\frac{1}{2}\int_0^{T}\int_{\partial\Omega}|\nabla u\cdot n|^2(m\cdot n)d\sigma dt.
\end{multline}

Now, taking the multiplier $m$ as a lifting of the outward unit normal $n$, so that $m\cdot n=1$, on $\Gamma$ and using that $(u,u_{t},u_{tt})\in C([0,T];(H^2(\Omega)\cap H_0^1(\Omega))\times H_0^1(\Omega)\times L^2(\Omega))$ we obtain

\begin{multline}\label{3-SZ}
\D 
\dfrac{1}{2} \int_0^{T}\int_{\partial\Omega}|\nabla u\cdot n|^2d\sigma dt+ \dfrac{1}{2} \int_0^{T}\int_{\partial\Omega}|\nabla u_{t}\cdot n|^2d\sigma dt\\
= -\int_{0}^{T}\int_{\Omega}(f-c^2L_0u-\gamma u_{tt})m\nabla u_{t} dxdt+\frac{1}{2}\int_0^{T}\int_{\Omega}\mbox{div}(m)| u_{tt}|^2dxdt \\
+\int_{\Omega}u_{tt}m\nabla u_{t}\Big|_0^{T}dx+\int_0^{T}\int_{\Omega}\frac{\partial u_{t}}{\partial x_{i}}\frac{\partial m_{j}}{\partial x_{i}}\frac{\partial u_{t}}{\partial x_{j}}dxdt
-\frac{1}{2}\int_0^{T}\int_{\Omega}\mbox{div}(m)|\nabla u_{t}|^2dxdr\\
 -\int_{0}^{T}\int_{\Omega}mL_0u\nabla u dxdt+\frac{1}{2}\int_0^{T}\int_{\Omega}\mbox{div}(m)| u_{t}|^2dxdt+\int_{\Omega}u_{t}m\nabla u\Big|_0^{T}dx\\ 
+\int_0^{T}\int_{\Omega}\frac{\partial u}{\partial x_{i}}\frac{\partial m_{j}}{\partial x_{i}}\frac{\partial u}{\partial x_{j}}dxdt
-\frac{1}{2}\int_0^{T}\int_{\Omega}\mbox{div}(m)|\nabla u|^2dxdt \\
\leq
C \left(\|f\|^2_{L^2(0,T;L^2(\Omega))}+\overline{E}(0)+\|\Delta u_0\|_{L^2(\Omega)}^2\right).
\end{multline}

 From \eqref{3-SZ}, using the continuous dependence of the solution with respect to the data, we obtain the desired estimate \eqref{1.51} and the proof is finished.

\end{proof}

\section{Proof of Main Results}\label{s3}

In this section we prove our main results, that is, Theorem \ref{mainresult} and Theorem \ref{observability}. First, we obtain the Carleman estimate given in Theorem \ref{observability} and then we apply this inequality to solve our inverse problem.

We use the following notation for the weighted energy of the wave operator $L_0$ 
\begin{align}\label{WEner}
W(y) := 
s\lambda \int_{0}^T\int_\Omega e^{2s\varphi_\lambda}\varphi_\lambda ( |y_t|^2 + |\nabla y|^2)dxdt 
+s^3\lambda^3 \int_{0}^T\int_\Omega e^{2s\varphi_\lambda}\varphi^3_\lambda |y|^2 dxdt,
\end{align}
with $\varphi_{\lambda}$ is given by \eqref{4-SZ}. Also, we recall  the operator $L$ defined in Section \ref{s2}:
\begin{align*}
Ly:=L_0 y_{t}+c^2L_0 y +\gamma y_{tt}.
\end{align*}

\begin{proof}[{\bf Proof of  Theorem \ref{observability}}]
Let $y\in L^2(0,T;H_0^1(\Omega))$ satisfying $Ly = f \in L^2(\Omega\times(0,T))$, $y(\cdot,0)=y_{t}(\cdot,0)=0$ in $\Omega$, and 
$y_{tt}(\cdot,0) = y_2 \in L^2(\Omega)$. 
Then, by \cite[Theorem 2.10]{kaltenbacher2012exponential} then
$(y,y_{t},y_{tt})\in C([0,T];(H^2(\Omega)\cap H_0^1(\Omega))\times H_0^1(\Omega)\times L^2(\Omega))$ 
and satisfies the boundary value problem
\begin{align}
\left\{
\begin{array}{ll}
L_0y_t+c^2L_0y+ \gamma y_{tt}= f ,  &\Omega \times (0,T) \\
y=0, & \Gamma \times (0,T). \\
y(\cdot,0) = 0, \medspace y_t(\cdot,0) =0 ,  \medspace y_{tt}(\cdot,0) = y_2,  & \Omega  
\end{array}
\right.
\end{align}

For a given function $F$ defined in $[0,T]$,  we will  denote   by $\widetilde F$ its  even extension, and by $\widehat F$ its odd extension  to $[-T,T]$.

Then $w = \widetilde{y}$ satisfies
\begin{align} \label{c21}
\left\{
\begin{array}{ll}
L_0w_t+\widehat{c^2}L_0w+\widehat{ \gamma} w_{tt}= \widehat{f} , & \Omega \times (-T,T) \\
w=0,  &\Gamma \times (-T,T). \\
w(\cdot,0) = 0, \medspace w_t(\cdot,0) =0 ,\medspace w_{tt}(\cdot,0) = y_2, &  \Omega.
\end{array}
\right.
\end{align}


%

We denote by $P$ the operator   
\begin{align*}
P:=\partial_{t}L_0+\widehat{c^2}L_0+\widehat{\gamma}\partial_{t}^2,
\end{align*}
and by $\|\cdot\|_{s}$ the  weighted norm
\begin{align*}
\|w\|_{s}^2:=\D\int_{-T}^{T}\int_{\Omega}e^{2s\varphi_{\lambda}}|w|^2dxdt,
\end{align*}
where $\varphi_{\lambda}$ is given by \eqref{4-SZ}. Then, 
\begin{align}\label{c4}
\|Pw- \widehat{\gamma}w_{tt}\|_{s}^2 
=
\|L_0w_{t}\|_{s}^2+c^4\|L_0w\|_{s}^2 +\D\int_{-T}^{T}\int_{\Omega}\widehat{c^2}e^{s\varphi_{\lambda}}\partial_{t}|L_0w|^2dxdt,
\end{align}
and, subsequently   
\begin{multline*}
\D\int_{-T}^{T}\int_{\Omega}\widehat{c^2}e^{s\varphi_{\lambda}}\partial_{t}|L_0w|^2dxdt=
\int_0^{T}\int_{\Omega}c^2e^{s\varphi_{\lambda}}\partial_{t}|L_0 w|^2dxdt  
-\int_{-T}^{0}\int_{\Omega}c^2e^{s\varphi_{\lambda}}\partial_{t}|L_0w|^2dxdt\\
\geq 
-2c^2\int_{\Omega}|L_0w(\cdot,0)|^2e^{s\varphi_{\lambda}(\cdot,0)}dx 
-\int_0^{T}\int_{\Omega}sc^2|L_0w|^2(\partial_{t}\varphi_{\lambda})e^{s\varphi_{\lambda}}dxdt 
\\+\int_{-T}^{0}\int_{\Omega}sc^2|L_0w|^2(\partial_{t}\varphi_{\lambda})e^{s\varphi_{\lambda}}dxdt.
\end{multline*}
Also, from the definition of the weight function, we  have 
\begin{align*}
\left\{
\begin{array}{ll}
\partial_{t}\varphi_{\lambda}<0, &\quad t\in(0,T),\\
\partial_{t}\varphi_{\lambda}>0, &\quad t\in(-T,0), 
\end{array}
\right.
\end{align*}
and then
\begin{align}\label{c2}
\D\int_{-T}^{T}\int_{\Omega}\widehat{c^2}e^{s\varphi_{\lambda}}\partial_{t}|L_0w|^2dxdt
\geq 
-2c^2\int_{\Omega}|L_0w(\cdot,0)|^2e^{s\varphi_{\lambda}(\cdot,0)}dx.
\end{align}

From \eqref{c4} and  \eqref{c2}, using that $w(\cdot,0)=0$, we deduce  that
\begin{align}\label{c5}
 \|L_0w_{t}\|_{s}^2+c^4\|L_0w\|_{s}^2-2c^2\int_{\Omega}|y_2(x)|^2e^{s\varphi_{\lambda}(\cdot,0)}dx
 \leq 
 \|Pw\|_{s}^2+ \|\widehat{\gamma}w_{tt}\|_{s}^2.
\end{align}

Hence, taking into account that $\phi(x,t) \leq \phi(x,0)$ for all $x \in \Omega$, and Lemma \eqref{Energy},
 we get 
 \begin{multline}
\int_{-T}^{T}\int_{\Omega}e^{2s\varphi_{\lambda}}|Pw|^2dxdt 
 \leq   
C \int_{0}^{T}\int_{\Omega}e^{2s\varphi_{\lambda}}|f|^2dxdt 
 + C \int_{\Omega}e^{2s\varphi_{\lambda}(\cdot,0) }|y_2|^2dxdt,
 \end{multline}
 which together with \eqref{c5} gives 
 \begin{align} 
 \|L_0w_{t}\|_{s}^2+c^4\|L_0w\|_{s}^2
 \leq  C  \int_{0}^{T}\int_{\Omega}e^{2s\varphi_{\lambda} }|f|^2dxdt 
  + C \int_{\Omega}e^{2s\varphi_{\lambda}(\cdot,0) }|y_2|^2dx
  + \|\widehat{\gamma}w_{tt}\|_{s}^2.  
  \label{acotPv}
 \end{align} 


Since 
$\widehat{\gamma} \in L^{\infty}(\Omega \times (-T,T))$,
from  \eqref{acotPv} we obtain that $L_0w$ and $L_0w_{t}$ belongs to $L^2(\Omega\times(-T,T))$. 
Therefore,  using the hidden regularity for the wave equation, we have that $\frac{\partial w}{\partial n}\in H^1(-T,T;L^2(\Gamma_0))$.  Then, we can apply the Carleman estimates given by Theorem 2.10 in \cite{baudouin2013global} for the wave equation to each term $L_0w$ and $L_0w_{t}$. Namely, we have
\begin{multline}\label{c6}
s\lambda\D\int_{-T}^{T}\int_{\Omega}e^{2s\varphi_{\lambda}}\varphi_{\lambda}(|w_{t}|^2+|\nabla w|^2)dxdt+s^3\lambda^3\D\int_{-T}^{T}\int_{\Omega}e^{2s\varphi_{\lambda}}\varphi_{\lambda}^3|w|^2 dxdt\\\leq C \int_{-T}^{T}\int_{\Omega}e^{2s\varphi_{\lambda}}|L_0 w|^2dxdt +Cs\lambda\int_{-T}^{T}\int_{\Gamma_0}e^{2s\varphi_{\lambda}}\left|\frac{\partial w}{\partial n}\right|^2d\sigma dt,
\end{multline}
where we use the fact that $w_{t}(\cdot,0)=0$, 
and
\begin{multline}\label{c7}
\sqrt{s}\int_{\Omega}e^{2s\varphi_{\lambda}(\cdot,0)}|y_2|^2dx+s\lambda\D\int_{-T}^{T}\int_{\Omega}e^{2s\varphi_{\lambda}}\varphi_{\lambda}(|w_{tt}|^2+|\nabla w_{t}|^2)dxdt\\+s^3\lambda^3\D\int_{-T}^{T}\int_{\Omega}e^{2s\varphi_{\lambda}}\varphi_{\lambda}^3|w_{t}|^2dxdt \leq C \int_{-T}^{T}\int_{\Omega}e^{2s\varphi_{\lambda}}|L_0 w_{t}|^2dxdt \\+Cs\lambda\int_{-T}^{T}\int_{\Gamma_0}e^{2s\varphi_{\lambda}}\left|\frac{\partial w_{t}}{\partial n}\right|^2d\sigma dt.
\end{multline}

Thus, from  \eqref{acotPv},  \eqref{c6} and \eqref{c7}  we obtain 
\begin{multline}\label{c9}
\sqrt{s}\int_{\Omega}e^{2s\varphi_{\lambda}(\cdot,0)}| y_2 |^2dx+c^4W(y)+W(y_{t}) 
\leq 
C  \int_{0}^{T}\int_{\Omega}e^{2s\varphi_{\lambda}(\cdot,0) }|f|^2dxdt
\\+ \|\widehat{\gamma}w_{tt}\|_{s}^2
+ C \int_{\Omega}|y_2|^2e^{s\varphi_{\lambda}(\cdot,0)}dx 
+s\lambda C \int_{-T}^T\int_{\Gamma_0} e^{2s\varphi_\lambda}\left(\left|\frac{\partial y_{t}}{\partial n}\right|^2 
+ c^4\left|\frac{\partial y}{\partial n}\right|^2\right)d\sigma dt.
\end{multline}

%

Then, there exists $s_0>0$ and $\lambda$ such that for every $s\geq s_0$ we absorb the second and third term in the right hand side of \eqref{c9} which implies
\begin{multline*}
\sqrt{s}\int_{\Omega}e^{2s\varphi_{\lambda}(\cdot,0)}|y_2|^2dx+c^4W(y)+W(y_{t})
\leq 
C  \int_{0}^{T}\int_{\Omega}e^{2s\varphi_{\lambda}(\cdot,0) }|f|^2dxdt
\\+s\lambda C \int_{0}^T\int_{\Gamma_0} e^{2s\varphi_\lambda}\left(\left|\frac{\partial y_{t}}{\partial n}\right|^2 
+ c^4\left|\frac{\partial y}{\partial n}\right|^2\right)d\sigma dt.
\end{multline*}

Finally, without loss of generality, we can take $M_0 > 0$ and $C > 1$ in definition \eqref{5-SZ} such that
$ \phi(x,0) \leq C \phi(x,t) $  for all $x \in \Omega$ and $t \in [0,T]$. Then we have
$ \varphi_{\lambda}(x,0) \leq C_1 \varphi_{\lambda}(x,t)$ for some $C_1 = C_1(\lambda)$ independent 
of $(x, t)  \in \Omega \times  [0,T]$, from where we conclude the desired estimate \eqref{1.133}.

\end{proof}

With the previous Carleman inequality, we can prove the main result of this article. 

\begin{proof}[{\bf Proof of  Theorem \ref{mainresult}}]
Using  the notation settled in
the previous section (see \eqref{gamma} and \eqref{1.6}), we write the MGT equation in the following way.
\begin{align}\label{NE}
\left\{
\begin{array}{ll}
L_0u_{t}+c^2L_0u+\gamma u_{tt}=f,  &\Omega\times(0,T) \\
u=h,  &\Gamma \times (0,T) \\
u(\cdot,0) = u_0, \medspace u_t(\cdot,0) = u_1, \medspace u_{tt}(\cdot,0) = u_2, &  \Omega.  
\end{array}
\right.
\end{align}
Hence, we will prove a stability estimate for 
coefficient $\gamma = \gamma(x)$ in  equation \eqref{NE}.

Let us denote by $u^k$ the  solution of  equation \eqref{NE} with coefficient $\gamma_k$, for $k=1, 2$, which 
existence is guaranteed by Theorem 2.10 in \cite{kaltenbacher2012exponential}.
Hence $z:= u^1 - u^2$ solves the following system.
\begin{align} \label{eqDif}
\left\{
\begin{array}{ll}
L_0z_t+c^2L_0z+\gamma_1(x)z_{tt}= (\gamma_2 - \gamma_1) R(x,t) ,  & \Omega \times (0,T) \\
z=0, & \Gamma \times (0,T) \\
z(\cdot,0) =  z_t(\cdot,0) =  z_{tt}(\cdot,0) =  0,  & \Omega  
\end{array}
\right.
\end{align}
where $R =  \partial_t^2 u^2$. 
%
Then  $y := \partial_t z$ satisfies
\begin{align} \label{eqDer}
\left\{
\begin{array}{ll}
L_0y_t+c^2L_0y+\gamma_1(x)y_{tt}= (\gamma_2 - \gamma_1) \partial_t R(x,t) , & \Omega \times (0,T) \\
y=0, & \Gamma \times (0,T) \\
y(\cdot,0) =  y_{t}(\cdot,0) = 0, \medspace  y_{tt}(\cdot,0) =  (\gamma_2 - \gamma_1)  R(x,0), &  \Omega  
\end{array}
\right.
\end{align}
Since $\gamma_2-\gamma_1$ belongs, in particular, to $L^2(\Omega)$ and $R\in H^1(0,T;L^{\infty}(\Omega))$, by Theorem 2.10 in \cite{kaltenbacher2012exponential}, we obtain that the Cauchy problem \eqref{eqDer} is well--posed and admits a unique solution
\begin{align*}
(y,y_{t},y_{tt})\in C([0,T]; (H^2(\Omega)\cap H_0^1(\Omega))\times H_0^1(\Omega)\times L^2(\Omega)).
\end{align*}
Moreover, from Theorem \ref{Regularity} the normal derivative $\frac{\partial y}{\partial n}$ belongs to $H^1(0,T;L^2(\Gamma))$ and satisfy
\begin{align*}
\left\|\frac{\partial y}{\partial n}\right\|_{H^1(0,T;L^2(\Gamma))}^2\leq C\|\gamma_2-\gamma_1\|_{L^2(\Omega)}^2(\|R(\cdot,0)\|_{L^{\infty}(\Omega)}^2+\|\partial_t R\|_{L^2(0,T;L^{\infty}(\Omega))}).
\end{align*}
This last estimate gives that $\frac{\partial z}{\partial n} \in H^2(0,T;L^2(\Gamma_0))$ and proves the
second inequality  of \eqref{stabi}.

Next, we apply Theorem \ref{observability} to $y$. From system \eqref{eqDer} we have
\begin{align*}
 \int_{0}^{T}\int_{\Omega}e^{2s\varphi_{\lambda} }|Ly|^2dxdt
 \leq C(\|\gamma_1\|_{L^{\infty}(\Omega)},\|\partial_{t}R\|_{L^2(0,T;L^{\infty}(\Omega))})\int_{\Omega}e^{2s\varphi_{\lambda}(\cdot,0)}|\gamma_2-\gamma_1|^2dx.
\end{align*}

Thus, from \eqref{1.133}
\begin{multline*}
\sqrt{s}\int_{\Omega}e^{2s\varphi_{\lambda}(\cdot,0)}|\gamma_2-\gamma_1|^2|R(x,0)|^2dx\leq
C\int_{\Omega}e^{2s\varphi_{\lambda}(\cdot,0)}|\gamma_2-\gamma_1|^2dx\\ +Cs\lambda\int_{0}^{T}\int_{\Gamma_0}e^{2s\varphi_{\lambda}}\left(\left|\frac{\partial y_{t}}{\partial n}\right|^2+c^4\left|\frac{\partial y}{\partial n}\right|^2\right)d\sigma dt,
\end{multline*}
which implies, using that
$ 
|R(x,0)| = |u_2| \geq \eta > 0 
$  a.e in $\Omega$,
\begin{multline*}
\eta^2\sqrt{s}\int_{\Omega}e^{2s\varphi_{\lambda}(\cdot,0)}|\gamma_2-\gamma_1|^2dx\leq
C\int_{\Omega}e^{2s\varphi_{\lambda}(\cdot,0)}|\gamma_2-\gamma_1|^2dx\\ +Cs\lambda\int_{0}^{T}\int_{\Gamma_0}e^{2s\varphi_{\lambda}}\left(\left|\frac{\partial y_{t}}{\partial n}\right|^2+c^4\left|\frac{\partial y}{\partial n}\right|^2\right)d\sigma dt.
\end{multline*}

Therefore, taking $s$ large enough we absorb the first term in the right hand side and we have
\begin{align*}
\eta^2\int_{\Omega}e^{2s\varphi_{\lambda}(\cdot,0)}|\gamma_2-\gamma_1|^2dx\leq
C\sqrt{s}\lambda\int_{0}^{T}\int_{\Gamma_0}e^{2s\varphi_{\lambda}}\left(\left|\frac{\partial y_{t}}{\partial n}\right|^2+c^4\left|\frac{\partial y}{\partial n}\right|^2\right)d\sigma dt,
\end{align*}
which is the  first  inequality  of \eqref{stabi} and the proof is finished.
\end{proof}

\section{Reconstruction of the coefficient}\label{s4}

In this section we shall propose an reconstruction algorithm for the  unknown parameter $\gamma$,  from measurements of the normal derivative of the solution $u(\gamma)$ of the MGT equation  \eqref{NE}. 
%
This algorithm is an extension of the work of Baudouin, Buhan and Ervedoza \cite{baudouin2013global}, in which they propose a reconstruction algorithm for the potential of the wave equation.

 By Theorem \ref{mainresult}, we known that the knowledge of $\frac{\partial u}{\partial n}$ on   $\Gamma_0\times (0,T)$ 
is enough to 
identify the parameter $\gamma$. Then $\alpha \in {\mathcal A}_M$ is equivalent to ask that $\gamma$ belongs to
\begin{align}\label{asumad}
\mathcal{B}_{M}:=\{\gamma \in L^{\infty}(\Omega),\quad 0\leq \gamma(x)\leq M, \quad \forall x\in\overline{\Omega}\}.
\end{align}

Let $\gamma\in \mathcal{B}_{M}$. Let $g\in L^2(\Omega\times(0,T))$ and $\mu\in H^1(0,T;L^2(\Gamma_0))$. 
Given $\varphi_\lambda$ defined in \eqref{4-SZ} with $\lambda >0$ given by Theorem \ref{observability}, we define the functional 
\begin{multline}\label{5.1}
J[\mu,g](y)=\frac{1}{2s}\int_{0}^{T}\int_{\Omega}e^{2s\varphi_{\lambda}}|Ly-g|^2dxdt\\+\frac{1}{2}\int_{0}^{T}\int_{\Gamma_0}e^{2s\varphi_{\lambda}}\left(\left|\frac{\partial y}{\partial n}-\mu\right|^2+\left|\frac{\partial y_{t}}{\partial n}-\mu_{t}\right|^2\right)d\sigma dt,
\end{multline}
defined in the space
\begin{align}\label{5.2}
\mathcal{V}=\{y\in L^2(0,T;H_0^1(\Omega))\mbox{ with }Ly\in L^2(\Omega\times(0,T)), y(\cdot,0)=y_{t}(\cdot,0)=0 \nonumber\\\mbox{ and } y_{tt}(\cdot,0)\in L^2(\Omega)\},
\end{align}
with the family of semi--norms
\begin{align}\label{5.3}
\|y\|_{\mathcal{V}, s}^2
:=
\frac{1}{s}\int_{0}^{T}\int_{\Omega}e^{2s\varphi_{\lambda}}|Ly|^2dxdt+\int_{0}^{T}\int_{\Gamma_0}e^{2s\varphi_{\lambda}}\left(\left|\frac{\partial y}{\partial n}\right|^2+\left|\frac{\partial y_{t}}{\partial n}\right|^2\right)d\sigma dt.
\end{align}

A few remarks about this semi--norms (for more details see \cite[Section 4]{baudouin2013global}):
\begin{remark}
\begin{enumerate}
\item\label{prue} Since the weighted functions $e^{s\varphi_{\lambda}}$ are bounded from below and from above by a positive constants depending on $s$, the semi--norms \eqref{5.3} are equivalent to
\begin{align*}
\|y\|_{\mathcal{V}}^2:=\int_{0}^{T}\int_{\Omega}|Ly|^2dxdt+\int_{0}^{T}\int_{\Gamma_0}\left(\left|\frac{\partial y}{\partial n}\right|^2+\left|\frac{\partial y_{t}}{\partial n}\right|^2\right)d\sigma dt,
\end{align*}
in the sense that there exists a constant $C=C(s)$, such that for all $y\in\mathcal{V}$
\begin{align*}
\frac{1}{C}\|y\|_{\mathcal{V}}^2\leq  
\|y\|_{\mathcal{V}, s}^2 \leq 
C\|y\|_{\mathcal{V}}^2.
\end{align*}

\item By Theorem \ref{observability}, there exists $s_0>0$ such that for every $s\geq s_0$ the semi--norm \eqref{5.3} is actually a norm. Hence, from 1. we have that  $\|\cdot\|_{\mathcal{V},s}$   is a norm for all $s>0$. 
In the rest of the paper, we will omit the subscript $s$ in the notation. 
 \end{enumerate}
\end{remark}

The first result concerning the reconstruction of $\gamma$, is to guarantee that the functional $J[\mu,g]$ reaches the minimum. Moreover, we have the following uniqueness result.

\begin{theorem}\label{minimo}
Assume the same hypotheses of Theorem \ref{observability} and assume that $g\in L^2(\Omega\times(0,T))$ and $\mu\in H^1(0,T;L^2(\Gamma_0))$. Then, for all $s>0$ and $\gamma\in \mathcal{B}_{M}$, the functional $J[\mu,g]$ defined by \eqref{5.1} is continuous, strictly convex and coercive on $\mathcal{V}$. Besides, the minimizer $y^{*}$ satisfies 
\begin{align*}
\|y^{*}\|_{\mathcal{V}}^2\leq\frac{4}{s}\int_{0}^{T}\int_{\Omega}e^{2s\varphi_{\lambda}}|g|^2dxdt+4\int_{0}^{T}\int_{\Gamma_0}e^{2s\varphi_{\lambda}}(|\mu|^2+|\mu_{t}|^2)d\sigma dt.
\end{align*}
\end{theorem}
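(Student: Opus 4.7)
My plan is to prove the four claimed properties (continuity, strict convexity, coercivity, and the bound on the minimizer) by treating $J[\mu,g]$ as a quadratic functional whose quadratic part coincides, up to a factor, with the square of the norm $\|\cdot\|_{\mathcal V}$ defined in \eqref{5.3}. Since the problem is quadratic, the key algebraic tools will be the elementary inequalities $|a-b|^2 \leq 2|a|^2 + 2|b|^2$ and $|a-b|^2 \geq \frac{1}{2}|a|^2 - |b|^2$, applied pointwise under each integral sign; the weight $e^{2s\varphi_\lambda}$ is bounded above and below on $\Omega\times(0,T)$ and thus causes no difficulty.

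First I would verify \emph{continuity}. Expanding the squares in \eqref{5.1} and using $|a-b|^2\leq 2|a|^2+2|b|^2$, one bounds $J[\mu,g](y_1)-J[\mu,g](y_2)$ by a linear combination of $\|y_1-y_2\|_{\mathcal V}$ and of $(\|y_1\|_{\mathcal V}+\|y_2\|_{\mathcal V})\|y_1-y_2\|_{\mathcal V}$, together with terms involving the data $(g,\mu)$ that are finite by assumption. Second, for \emph{strict convexity}, I note that $J[\mu,g]$ is the sum of a linear term (in $y$), a constant, and the positive semidefinite quadratic form $y\mapsto \frac12\|y\|_{\mathcal V}^2$. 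By the remark following \eqref{5.3}, $\|\cdot\|_{\mathcal V}$ is actually a norm on $\mathcal V$ (this uses Theorem~\ref{observability} in the observable direction), so the quadratic form is positive definite, hence $J[\mu,g]$ is strictly convex.

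Third, for \emph{coercivity}, I apply $|a-b|^2 \geq \tfrac12 |a|^2 - |b|^2$ pointwise in \eqref{5.1} to obtain
\begin{equation*}
J[\mu,g](y)\;\geq\; \frac{1}{4}\|y\|_{\mathcal V}^2 \;-\; \frac{1}{2s}\int_0^T\!\!\int_\Omega e^{2s\varphi_\lambda}|g|^2\,dx\,dt \;-\; \frac{1}{2}\int_0^T\!\!\int_{\Gamma_0} e^{2s\varphi_\lambda}\bigl(|\mu|^2+|\mu_t|^2\bigr)\,d\sigma\,dt,
\end{equation*}
which gives coercivity. Continuity, strict convexity and coercivity on $\mathcal V$ (endowed with the Hilbert structure associated with $\|\cdot\|_{\mathcal V}$) then yield the existence of a unique minimizer $y^*\in\mathcal V$ by the direct method of the calculus of variations applied to a minimizing sequence.

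Finally, for the \emph{bound on the minimizer}, I would compare $y^*$ with the admissible choice $y\equiv 0$ (which belongs to $\mathcal V$). This gives $J[\mu,g](y^*)\leq J[\mu,g](0)= \tfrac{1}{2s}\!\int\!\!\int e^{2s\varphi_\lambda}|g|^2 + \tfrac{1}{2}\!\int\!\!\int e^{2s\varphi_\lambda}(|\mu|^2+|\mu_t|^2)$. Substituting this into the coercivity inequality above yields exactly
\begin{equation*}
\|y^*\|_{\mathcal V}^2 \;\leq\; \frac{4}{s}\int_0^T\!\!\int_\Omega e^{2s\varphi_\lambda}|g|^2\,dx\,dt \;+\; 4\int_0^T\!\!\int_{\Gamma_0} e^{2s\varphi_\lambda}\bigl(|\mu|^2+|\mu_t|^2\bigr)\,d\sigma\,dt,
\end{equation*}
as claimed. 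The only genuinely delicate point is the standing assertion that $\mathcal V$ endowed with $\|\cdot\|_{\mathcal V}$ is a Hilbert space, which would justify extracting a convergent subsequence from a minimizing sequence; this relies on completeness of $\mathcal V$ (together with the equivalence of semi-norms given in item \ref{prue} of the preceding remark) and is the only step I would need to pin down carefully beyond the routine quadratic-functional computations.
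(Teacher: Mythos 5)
Your proof is correct and follows essentially the same route as the paper: both establish coercivity by expanding the quadratic functional and applying a Young-type inequality to obtain $J[\mu,g](y)\geq \frac{1}{4}\|y\|_{\mathcal V}^2 - C(g,\mu)$, and both derive the minimizer bound from the comparison $J[\mu,g](y^*)\leq J[\mu,g](0)$ with the same constants. Your additional observations---identifying the quadratic part of $J[\mu,g]$ with $\frac12\|y\|_{\mathcal V}^2$ so that strict convexity follows from Theorem \ref{observability} making $\|\cdot\|_{\mathcal V}$ a genuine norm, and flagging that completeness of $(\mathcal V,\|\cdot\|_{\mathcal V})$ is needed for the direct method---are exactly the points the paper leaves implicit (deferring to \cite{baudouin2013global}), so they strengthen rather than diverge from its argument.
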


\begin{proof}
The continuity and convexity is immediately. Let us see the coercivity.  
\begin{multline*}
J[\mu,g](y)= \frac{1}{2s}\int_{0}^{T}\int_{\Omega}e^{2s\varphi_{\lambda}}|Ly|^2dxdt+\frac{1}{2s}\int_{0}^{T}\int_{\Omega}e^{2s\varphi_{\lambda}}|g|^2dxdt
 -\frac{1}{s}\int_{0}^{T}\int_{\Omega}e^{2s\varphi_{\lambda}}gLydxdt\\+
\frac{1}{2}\int_{0}^{T}\int_{\Gamma_0}e^{2s\varphi_{\lambda}}\left(\left|\frac{\partial y}{\partial n}\right|^2
+\left|\frac{\partial y_{t}}{\partial n}\right|^2\right)d\sigma dt
 -\int_{0}^{T}\int_{\Gamma_0}e^{2s\varphi_{\lambda}}\left(\mu\frac{\partial y}{\partial n}+\mu_t\frac{\partial y_t}{\partial n}\right)\\+\frac{1}{2}\int_{0}^{T}\int_{\Gamma_0}e^{2s\varphi_{\lambda}}(|\mu|^2+|\mu_{t}|^2)d\sigma dt.
\end{multline*}
Using the fact that $2ab\leq 2a^2+\frac{b^2}{2}$, we deduce
\begin{multline*}
J[\mu,g](y)\geq \frac{1}{4s}\int_{0}^{T}\int_{\Omega}e^{2s\varphi_{\lambda}}|Ly|^2dxdt -\frac{1}{2s}\int_{0}^{T}\int_{\Omega}e^{2s\varphi_{\lambda}}|g|^2dxdt\\
\quad +\frac{1}{4}\int_{0}^{T}\int_{\Gamma_0}e^{2s\varphi_{\lambda}}\left(\left|\frac{\partial y}{\partial n}\right|^2
+\left|\frac{\partial y_{t}}{\partial n}\right|^2\right)d\sigma dt
-\frac{1}{2}\int_{0}^{T}\int_{\Gamma_0}e^{2s\varphi_{\lambda}}(|\mu|^2+|\mu_{t}|^2)d\sigma dt\\
=\frac{1}{4}\|y\|_{\mathcal{V}}^2-\frac{1}{s}\int_{0}^{T}\int_{\Omega}e^{2s\varphi_{\lambda}}|g|^2dxdt-\int_{0}^{T}\int_{\Gamma_0}e^{2s\varphi_{\lambda}}(|\mu|^2+|\mu_{t}|^2)d\sigma dt.
\end{multline*}

Therefore, the functional $J[\mu,g]$ admits a unique minimizer $y^{*}$ in $\mathcal{V}$.

Now, let us prove the estimates on the minimizer. First, we develop the functional $J[\mu,g](y^{*})$:
\begin{multline*}
J[\mu,g](y^{*})=\frac{1}{2s}\int_{0}^{T}\int_{\Omega}e^{2s\varphi_{\lambda}}|Ly^{*}|^2dxdt+\frac{1}{2}\int_{0}^{T}\int_{\Gamma_0}e^{2s\varphi_{\lambda}}\left(\left|\frac{\partial y^{*}}{\partial n}\right|^2+\left|\frac{\partial y_{t}^{*}}{\partial n}\right|^2\right)d\sigma dt\\
\quad +\frac{1}{2s}\int_{0}^{T}\int_{\Omega}e^{2s\varphi_{\lambda}}|g|^2dxdt+\frac{1}{2}\int_{0}^{T}\int_{\Gamma_0}e^{2s\varphi_{\lambda}}(|\mu|^2+|\mu_{t}|^2)d\sigma dt\\
\quad -\frac{1}{s}\int_{0}^{T}\int_{\Omega}e^{2s\varphi_{\lambda}}gLy^{*} dxdt-\int_{0}^{T}\int_{\Gamma_0}e^{2s\varphi_{\lambda}}\left(\mu\frac{\partial y^{*}}{\partial n}+\mu_{t}\frac{\partial y_{t}^{*}}{\partial n}\right)d\sigma dt.
\end{multline*}

Next, since $y^{*}$ is the minimizer, we have that $J[\mu,g](y^{*})\leq J[\mu,g](0)$, which implies in particular 
\begin{multline*}
\frac{1}{2s}\int_{0}^{T}\int_{\Omega}e^{2s\varphi_{\lambda}}|Ly^{*}|^2dxdt+\frac{1}{2}\int_{0}^{T}\int_{\Gamma_0}e^{2s\varphi_{\lambda}}\left(\left|\frac{\partial y^{*}}{\partial n}\right|^2+\left|\frac{\partial y_{t}^{*}}{\partial n}\right|^2\right)d\sigma dt\\\leq \frac{1}{s}\int_{0}^{T}\int_{\Omega}e^{2s\varphi_{\lambda}}gLy^{*}dxdt+\int_{0}^{T}\int_{\Gamma_0}e^{2s\varphi_{\lambda}}\left(\mu\frac{\partial y^{*}}{\partial n}+\mu_{t}\frac{\partial y_{t}^{*}}{\partial n}\right)d\sigma dt
\end{multline*}
Therefore, using that $2ab\leq 2a^2+\frac{b^2}{2}$ and the definition of the norm $\|\cdot\|_{\mathcal{V}}$, we deduce
\begin{align*}
\frac{1}{4}\|y^{*}\|_{\mathcal{V}}^2\leq \frac{1}{s}\int_{0}^{T}\int_{\Omega}e^{2s\varphi_{\lambda}}|g|^2dxdt+\int_{0}^{T}\int_{\Gamma_0}e^{2s\varphi_{\lambda}}(|\mu|^2+|\mu_{t}|^2)d\sigma dt.
\end{align*}

\end{proof}

Secondly, the following Theorem gives a relationship between the unique minimizer of $J[\mu,g]$ and $g$. This is, together with the Theorem \ref{observability}, an essential result for the  proof of convergence of our algorithm of reconstruction. 

\begin{theorem}\label{minimo2}
Assume the same hypotheses of Theorem \ref{observability} and assume that $\mu\in H^1(0,T;L^2(\Gamma_0))$ and $g^1,g^2\in L^2(\Omega\times(0,T))$. Let $y^{*,i}$ be the unique minimizer of the functional $J[\mu,g^{i}]$, for $i=1,2$. Then, there exists $s_0>0$ and a constant $C>0$ such that for all $s\geq s_0$
\begin{align}\label{5.4}
\sqrt{s}\int_{\Omega}e^{2s\varphi_{\lambda}(\cdot,0)}|y_{tt}^{*,1}(\cdot,0)-y_{tt}^{*,2}(\cdot,0)|
^2dx\leq C\int_{0}^{T}\int_{\Omega}e^{2s\varphi_{\lambda}}|g^1-g^2|^2dxdt.
\end{align}
\end{theorem}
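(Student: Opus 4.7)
The plan is to exploit the quadratic structure of $J[\mu,\cdot]$ to derive the Euler--Lagrange equations at each minimizer, combine them linearly to obtain an identity satisfied by the difference $z := y^{*,1} - y^{*,2}$, and then feed the resulting estimates into the Carleman inequality of Theorem \ref{observability}. Note first that $\mathcal{V}$ is a linear space (all defining conditions are linear), so $z \in \mathcal{V}$; in particular $z(\cdot,0) = z_t(\cdot,0) = 0$ and $z_{tt}(\cdot,0) \in L^2(\Omega)$, which is precisely what is needed to invoke Theorem \ref{observability}.

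First, I would compute the first variation of $J[\mu,g^{i}]$ at $y^{*,i}$. Since each $y^{*,i}$ is a critical point and $J[\mu,g^i]$ is quadratic, one obtains, for every $v\in\mathcal{V}$,
\begin{equation*}
\frac{1}{s}\int_0^T\!\!\int_\Omega e^{2s\varphi_\lambda}\bigl(Ly^{*,i}-g^{i}\bigr)Lv\,dxdt
+\int_0^T\!\!\int_{\Gamma_0} e^{2s\varphi_\lambda}\!\left(\left(\frac{\partial y^{*,i}}{\partial n}-\mu\right)\!\frac{\partial v}{\partial n}+\left(\frac{\partial y^{*,i}_t}{\partial n}-\mu_t\right)\!\frac{\partial v_t}{\partial n}\right)\!d\sigma dt=0.
\end{equation*}
Subtracting the identity for $i=2$ from that for $i=1$, the terms containing $\mu$ and $\mu_t$ cancel exactly (this is where the common boundary datum is used), leaving a relation involving only $z$ and $g^1-g^2$.

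Second, I would test that relation with $v=z$ and apply the Young inequality $2ab \le a^2+b^2$ to the single cross term, obtaining
\begin{equation*}
\int_0^T\!\!\int_\Omega e^{2s\varphi_\lambda}|Lz|^2\,dxdt
+2s\int_0^T\!\!\int_{\Gamma_0} e^{2s\varphi_\lambda}\!\left(\left|\frac{\partial z}{\partial n}\right|^2+\left|\frac{\partial z_t}{\partial n}\right|^2\right)d\sigma dt
\le \int_0^T\!\!\int_\Omega e^{2s\varphi_\lambda}|g^1-g^2|^2\,dxdt.
\end{equation*}
Since both terms on the left are non-negative, each one is controlled by the right-hand side.

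Third, I apply the Carleman estimate \eqref{1.133} to $z\in\mathcal{V}$; using that $z$ vanishes on $\Gamma$ so that $|\nabla z\cdot n|=|\partial_n z|$ (and similarly for $z_t$), and keeping only the boundary-value contribution on the left, I obtain
\begin{equation*}
\sqrt{s}\int_\Omega e^{2s\varphi_\lambda(\cdot,0)}|z_{tt}(\cdot,0)|^2\,dx
\le C\int_0^T\!\!\int_\Omega e^{2s\varphi_\lambda}|Lz|^2\,dxdt
+Cs\lambda\int_0^T\!\!\int_{\Gamma_0} e^{2s\varphi_\lambda}\!\left(\left|\frac{\partial z_t}{\partial n}\right|^2+c^4\left|\frac{\partial z}{\partial n}\right|^2\right)d\sigma dt.
\end{equation*}
Plugging the bounds from step two into the right-hand side (with $\lambda$ fixed by Theorem \ref{observability}), the term $s\lambda$ is absorbed using the factor $2s$ from the boundary estimate, and we get the desired inequality \eqref{5.4} for $s\ge s_0$, with a constant $C$ depending only on $\lambda$, $c$ and the constant in \eqref{1.133}.

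The only real point requiring care is step one: since the functional mixes an $L^2$-weighted operator norm with a boundary $H^1_t$-type norm, I must verify that the Fr\'echet derivative is well-defined on $\mathcal{V}$ and that the bilinear form that appears when testing with $v=z$ really produces the clean right-hand side $\int e^{2s\varphi_\lambda}|g^1-g^2|^2$; this hinges on the linearity of $L$ and on the symmetry of the boundary quadratic form, and is routine once the variational characterization is written down. Everything else is a direct application of the Carleman inequality already proved.
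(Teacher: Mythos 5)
Your proposal is correct and follows essentially the same route as the paper: writing the Euler--Lagrange identities for the two minimizers, subtracting them so the $\mu$-terms cancel, testing with the difference $z=y^{*,1}-y^{*,2}$, applying Young's inequality to the single cross term, and then invoking the Carleman estimate \eqref{1.133} on $z$ (which lies in $\mathcal{V}$ by linearity, with vanishing initial data $z(\cdot,0)=z_t(\cdot,0)=0$). The only differences are cosmetic choices of constants in the Young inequality, and your explicit checks of $z\in\mathcal{V}$ and $|\nabla z\cdot n|=|\partial z/\partial n|$ are details the paper leaves implicit.
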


\begin{proof}
Since $y^{*,i}$ is the unique minimizer of $J[\mu,g^{i}]$, for $i=1,2$, we have that for all $y\in\mathcal{V}$
\begin{multline}\label{5.5}
\frac{1}{s}\int_{0}^{T}\int_{\Omega}e^{2s\varphi_{\lambda}}(Ly^{*,1}-g^1)Lydxdt\\+\int_{0}^{T}\int_{\Gamma_0}e^{2s\varphi_{\lambda}}\left[\left(\frac{\partial y^{*,1}}{\partial  n}-\mu\right)\frac{\partial y}{\partial n}+\left(\frac{\partial y_{t}^{*,1}}{\partial  n}-\mu_{t}\right)\frac{\partial y_{t}}{\partial n}\right]d\sigma dt=0,
\end{multline}
and
\begin{multline}\label{5.6}
\frac{1}{s}\int_{0}^{T}\int_{\Omega}e^{2s\varphi_{\lambda}}(Ly^{*,2}-g^2)Lydxdt\\+\int_{0}^{T}\int_{\Gamma_0}e^{2s\varphi_{\lambda}}\left[\left(\frac{\partial y^{*,2}}{\partial  n}-\mu\right)\frac{\partial y}{\partial n}+\left(\frac{\partial y_{t}^{*,2}}{\partial  n}-\mu_{t}\right)\frac{\partial y_{t}}{\partial n}\right]d\sigma dt=0.
\end{multline}
Subtracting \eqref{5.5} and \eqref{5.6}, for $y=y^{*,1}-y^{*,2}$, we deduce that
\begin{multline*}
\frac{1}{s}\int_{0}^{T}\int_{\Omega}e^{2s\varphi_{\lambda}}|Ly|^2dxdt+\int_{0}^{T}\int_{\Gamma_0}e^{2s\varphi_{\lambda}}\left(\left|\frac{\partial y}{\partial n}\right|^2+\left|\frac{\partial y_{t}}{\partial n}\right|^2\right)d\sigma dt\\=\frac{1}{s}\int_{0}^{T}\int_{\Omega}e^{2s\varphi_{\lambda}}(g^1-g^2)Ly dxdt,
\end{multline*}
Then, applying again $2ab\leq 2a^2+\frac{b^2}{2}$ we obtain 
\begin{multline}\label{5.7}
\frac{1}{2}\int_{0}^{T}\int_{\Omega}e^{2s\varphi_{\lambda}}|Ly|^2dxdt+s\int_{0}^{T}\int_{\Gamma_0}e^{2s\varphi_{\lambda}}\left(\left|\frac{\partial y}{\partial n}\right|^2+\left|\frac{\partial y_{t}}{\partial n}\right|^2\right)d\sigma dt\\\leq2\int_{0}^{T}\int_{\Omega}e^{2s\varphi_{\lambda}}|g^1-g^2|^2 dxdt,
\end{multline}

Finally, by the estimate \eqref{1.133} of Theorem \ref{observability} we obtain the desired result. 

\end{proof}

Finally, we present our algorithm and the convergence result of this.\\

\hrulefill

{\bf Algorithm:}

\hrulefill
\begin{enumerate}
\item {\bf Initialization:} $\gamma^0=0$.
\item {\bf Iteration:} From $k$ to $k+1$
\begin{enumerate}
\item[Step 1] - Given $\gamma^{k}$ we consider  $\mu^{k}=\partial_{t}\left(\frac{\partial u(\gamma^{k})}{\partial n}-\frac{\partial u(\gamma)}{\partial n}\right)$ and  \\$\mu_{t}^{k}=\partial_{t}\left(\frac{\partial u_{t}(\gamma^{k})}{\partial n}-\frac{\partial u_{t}(\gamma)}{\partial n}\right)$ on $\Gamma_0\times(0,T)$, where $u(\gamma^{k})$ and $u(\gamma)$ are the solution of the problems
\begin{align}
\left\{
\begin{array}{ll}
L_0u_t+c^2L_0u+\gamma^k(x)u_{tt}=f,  & \Omega \times (0,T) \\
u=g, & \Gamma \times (0,T) \\
u(\cdot,0) = u_0, \medspace u_t(\cdot,0) = u_1, \medspace u_{tt}(\cdot,0) = u_2,  & \Omega  
\end{array}
\right.
\end{align}
and
\begin{align}
\left\{
\begin{array}{ll}
L_0u_t+c^2L_0u+\gamma(x)u_{tt}=f, & \Omega \times (0,T) \\
u=g, & \Gamma \times (0,T) \\
u(\cdot,0) = u_0, \medspace u_t(\cdot,0) = u_1, \medspace u_{tt}(\cdot,0) = u_2, & \Omega,
\end{array}
\right.
\end{align}
respectively. 

\item[Step 2] - Minimize the functional $J[\mu^{k},0]$ on the admissible  trajectories $y\in\mathcal{V}$:
\begin{multline*}
J[\mu^{k},0](y)=\frac{1}{2s}\int_{0}^{T}\int_{\Omega}e^{2s\varphi_{\lambda}}|L_0y_t+c^2L_0y+\gamma^k(x)y_{tt}|^2dxdt\\+\frac{1}{2}\int_{0}^{T}\int_{\Gamma_0}e^{2s\varphi_{\lambda}}\left(\left|\frac{\partial y}{\partial n}-\mu^{k}\right|^2+\left|\frac{\partial y_{t}}{\partial n}-\mu_{t}^{k}\right|^2\right)d\sigma dt
\end{multline*}

\item[Step 3] - Let $y^{*,k}$ the minimizer of $J[\mu^{k},0]$ and 
\begin{align}\label{5.10}
\widetilde{\gamma}^{k+1}=\gamma^{k}+\frac{y_{tt}^{*,k}(\cdot,0)}{u_2}.
\end{align}

\item[Step 4] - Finally, consider $\gamma^{k+1}=T(\widetilde{\gamma}^{k+1})$, where 
\begin{align}\label{5.11}
T(\gamma)=
\left\{
\begin{array}{ll}
M\quad &\mbox{ if }\gamma(x)>M\\
\\
\gamma\quad &\mbox{ if } 0\leq \gamma(x)\leq M\\
\\
0\quad &\mbox{ if }\gamma(x)<0.
\end{array}
\right.
\end{align}
\end{enumerate}
\end{enumerate}
\hrulefill\\

Therefore, under the previous Theorem \ref{minimo2}, we can prove the convergence of this algorithm:

\begin{theorem}\label{convergencia}
Assume the same hypotheses of Theorem \ref{observability}, and the following assumption of $u(\gamma):$
\begin{align}
u(\gamma)\in H^3(0,T;L^{\infty}(\Omega)) \mbox{ and } |u_2|\geq\eta>0.
\end{align}

Then, there exists a constant $C>0$ and $s_0>0$ such that for all $s\geq s_0$ and $k\in\N$
\begin{align}\label{4.13}
\int_{\Omega}e^{2s\varphi_{\lambda}(\cdot,0)}(\gamma^{k+1}-\gamma)^2dx\leq \frac{C}{\sqrt{s}}\int_{\Omega}e^{2s\varphi_{\lambda}(\cdot,0)}(\gamma^{k}-\gamma)^2dx.
\end{align}
\end{theorem}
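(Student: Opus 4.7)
The strategy is to identify, at each iteration, an \emph{auxiliary} function in $\mathcal V$ whose jet at $t=0$ encodes exactly the error $\gamma-\gamma^k$ and then to invoke Theorem~\ref{minimo2} to compare this auxiliary function with the algorithm's minimizer $y^{*,k}$.

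First, I introduce $z^k:=u(\gamma^k)-u(\gamma)$ and $y^k:=\partial_t z^k$. Subtracting the two equations defining $u(\gamma^k)$ and $u(\gamma)$ and differentiating once in $t$, one checks that $y^k$ solves
\begin{equation*}
Ly^k = L_0 y^k_t+c^2 L_0 y^k+\gamma^k y^k_{tt}=(\gamma-\gamma^k)\,\partial_t^3 u(\gamma)=:g^k \quad\text{in }\Omega\times(0,T),
\end{equation*}
with homogeneous boundary data on $\Gamma$, while the initial data satisfy $y^k(\cdot,0)=y^k_t(\cdot,0)=0$ (since $u(\gamma^k)$ and $u(\gamma)$ share initial data up to order $2$), and evaluating the equation for $z^k$ at $t=0$ yields
\begin{equation*}
y^k_{tt}(\cdot,0)=z^k_{ttt}(\cdot,0)=(\gamma-\gamma^k)\,u_2 \quad\text{in }\Omega.
\end{equation*}
Moreover, by construction of $\mu^k$, the traces $\partial_n y^k=\mu^k$ and $\partial_n y^k_t=\mu^k_t$ on $\Gamma_0\times(0,T)$. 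Regularity of $u(\gamma)\in H^3(0,T;L^\infty(\Omega))$ guarantees $g^k\in L^2(\Omega\times(0,T))$ and $y^k\in\mathcal V$.

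The key observation is then that $y^k$ makes the functional $J[\mu^k,g^k]$ identically zero, hence $y^k$ is the unique minimizer of $J[\mu^k,g^k]$. On the other hand, $y^{*,k}$ is by construction the unique minimizer of $J[\mu^k,0]$. Applying Theorem~\ref{minimo2} with $g^1=0$ and $g^2=g^k$ (so that the corresponding minimizers are $y^{*,k}$ and $y^k$) gives, for all $s\ge s_0$,
\begin{equation*}
\sqrt{s}\int_\Omega e^{2s\varphi_\lambda(\cdot,0)}\bigl|y^{*,k}_{tt}(\cdot,0)-y^k_{tt}(\cdot,0)\bigr|^2dx
\le C\int_0^T\!\!\int_\Omega e^{2s\varphi_\lambda}|g^k|^2\,dx\,dt.
\end{equation*}

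Next I translate this into an estimate on the updated coefficient. Since $y^k_{tt}(\cdot,0)=(\gamma-\gamma^k)u_2$, the definition \eqref{5.10} of $\widetilde\gamma^{k+1}$ yields
\begin{equation*}
\widetilde\gamma^{k+1}-\gamma=\frac{y^{*,k}_{tt}(\cdot,0)-y^k_{tt}(\cdot,0)}{u_2},
\end{equation*}
and the hypothesis $|u_2|\ge\eta>0$ gives a pointwise bound. For the right-hand side, using $|g^k|^2=(\gamma-\gamma^k)^2|\partial_t^3 u(\gamma)|^2$, the boundedness of $\partial_t^3 u(\gamma)$ in $L^2(0,T;L^\infty(\Omega))$, and the monotonicity $\varphi_\lambda(x,t)\le\varphi_\lambda(x,0)$ in $t$ (which follows from $\phi(x,t)=|x-x_0|^2-\beta t^2+M_0$), one obtains
\begin{equation*}
\int_0^T\!\!\int_\Omega e^{2s\varphi_\lambda}|g^k|^2\,dx\,dt
\le \|\partial_t^3u(\gamma)\|_{L^2(0,T;L^\infty(\Omega))}^2\int_\Omega e^{2s\varphi_\lambda(\cdot,0)}(\gamma-\gamma^k)^2\,dx.
\end{equation*}
Combining these two estimates produces \eqref{4.13} with $\widetilde\gamma^{k+1}$ in place of $\gamma^{k+1}$.

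Finally, since $\gamma\in\mathcal B_M$ takes values in $[0,M]$, the truncation $T$ defined in \eqref{5.11} is the pointwise projection onto $[0,M]$, hence non-expansive: $|T(\widetilde\gamma^{k+1})(x)-\gamma(x)|\le|\widetilde\gamma^{k+1}(x)-\gamma(x)|$ for a.e.\ $x\in\Omega$. Multiplying by $e^{2s\varphi_\lambda(\cdot,0)}$ and integrating transfers the estimate from $\widetilde\gamma^{k+1}$ to $\gamma^{k+1}=T(\widetilde\gamma^{k+1})$, which yields \eqref{4.13}. The main technical point, and where care is needed, is the correct identification of the initial jet and the source $g^k$ of $y^k$; once $y^k$ is recognized as the zero of $J[\mu^k,g^k]$, Theorem~\ref{minimo2} does the rest.
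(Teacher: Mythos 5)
Your proposal is correct and follows essentially the same route as the paper: the same auxiliary function $y^k=\partial_t(u(\gamma^k)-u(\gamma))$ with source $g^k=(\gamma-\gamma^k)\partial_t R$ and initial jet $y^k_{tt}(\cdot,0)=(\gamma-\gamma^k)u_2$, the same application of Theorem~\ref{minimo2} comparing $y^{*,k}$ with $y^k$, the same use of $|u_2|\ge\eta$, the monotonicity of $\varphi_\lambda$ in $t$, and the non-expansiveness of the truncation. The only cosmetic difference is that you identify $y^k$ as the minimizer of $J[\mu^k,g^k]$ by observing that the functional vanishes there, while the paper phrases this via the Euler--Lagrange equations --- these are the same fact, and your version is if anything slightly more direct.
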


\begin{proof}
We consider $y^{k}=\partial_{t}(u(\gamma^{k})-u(\gamma))$, which is the solution of
\begin{align}\label{5.12}
\left\{
\begin{array}{ll}
L_0y_t^{k}+c^2L_0y^{k}+\gamma(x)^{k}y_{tt}^{k}=(\gamma-\gamma^{k})\partial_{t}R(x,t), & \Omega \times (0,T) \\
y^{k}=0,  & \Gamma \times (0,T) \\
y^{k}(\cdot,0) =0 ,\medspace  y_t^k(\cdot,0) = 0, \medspace y_{tt}^{k}(\cdot,0) = (\gamma-\gamma^{k})R(x,0),  & \Omega  
\end{array}
\right.
\end{align}
where $R(x,t)=\partial_{t}^2u(\gamma)$. Thus,
\begin{align}\label{mu}
\mu^{k}=\frac{\partial y^{k}}{\partial  n},\qquad \mu_{t}^{k}=\frac{\partial y_{t}^{k}}{\partial n}.
\end{align}

We observe that $y^{k}$ belongs to $\mathcal{V}$. Therefore, by \eqref{mu}, the solution $y^{k}$ of \eqref{5.12} satisfy the Euler--Lagrange equations associated to the functional $J[\mu^{k},g^{k}]$, where $g^{k}=(\gamma-\gamma^{k})\partial_{t}R(x,t)$. Since $J[\mu^{k},g^{k}]$ admits a unique minimizer, $y^{k}$ corresponds to minimum of $J[\mu^{k},g^{k}]$.

Let $y^{*,k}$ be the minimizer of $J[\mu^{k},0].$ From Theorem \ref{minimo2} we obtain that
\begin{align}
\sqrt{s}\int_{\Omega}e^{2s\varphi_{\lambda}(\cdot,0)}|y_{tt}^{*,k}(\cdot,0)-y_{tt}^{k}(\cdot,0)|
^2dx\leq C\int_{0}^{T}\int_{\Omega}e^{2s\varphi_{\lambda}}|(\gamma-\gamma^{k})\partial_{t}R(x,t)|^2dxdt.
\end{align}

From \eqref{5.10} and \eqref{5.12}
\begin{align*}
y_{tt}^{*,k}(\cdot,0)=(\widetilde{\gamma}^{k+1}-\gamma^{k})u_2, \qquad y_{tt}^{k}(\cdot,0)=(\gamma-\gamma^{k})u_2.
\end{align*}
This implies that, using that $|u_2|\geq\eta>0$
\begin{align}
\eta^2\sqrt{s}\int_{\Omega}e^{2s\varphi_{\lambda}(\cdot,0)}(\widetilde{\gamma}^{k+1}-\gamma)^2
dx\leq C\int_{0}^{T}\int_{\Omega}e^{2s\varphi_{\lambda}}|(\gamma-\gamma^{k})\partial_{t}R(x,t)|^2dxdt.
\end{align}

Since  the function $T$ defined in \eqref{5.11} is Lipschitz continuous and satisfy $T(\gamma)=\gamma$, we obtain
\begin{align}
|\widetilde{\gamma}^{k+1}-\gamma|\geq|T(\widetilde{\gamma}^{k+1})-T(\gamma)|=|\gamma^{k+1}-\gamma|.
\end{align}

On the other hand, since $\phi(\cdot,t)$ is decreasing in $t\in(0,T)$ and $\partial_{t}R(x,t)\in L^2(0,T;L^{\infty}(\Omega))$, we conclude 
\begin{align*}
\int_{\Omega}e^{2s\varphi_{\lambda}(\cdot,0)}(\gamma^{k+1}-\gamma)^2
dx\leq \frac{C}{\sqrt{s}}\frac{\|\partial_{t}R(x,t)\|_{L^2(0,T;L^{\infty}(\Omega)}^{}}{\eta^2}\int_{\Omega}e^{2s\varphi_{\lambda}(\cdot,0)}(\gamma-\gamma^{k})^2dxdt.
\end{align*}

\end{proof}

Let us finish this section with the following observation.
\begin{remark}
We can observe that this algorithm, from a theoretical point of view, is based on the minimization of a convex and coercive functional. Therefore, we can expect that numerical simulations can be done using, for instance, CasADi open-source tool for nonlinear optimization and algorithmic differentiation \cite{Andersson2019}. However, some drawbacks appears in its numerical simulations. This can be seen in the definition of the functional $J[\mu,g]$, which involves two exponentials
\begin{align*}
e^{2s\varphi_\lambda}=e^{2se^{\lambda\phi}}.
\end{align*}
Both parameters $\lambda$ and $s$ are chosen large enough, in order to use the Carleman estimate given in Theorem \ref{observability}. This implies a immediately problem from a numerical point of view. For example, if we consider $s=\lambda=3$, $\Omega=(0,1)$, $x_0=0$, $T=1$ and $\beta=1$, the following ratio
\begin{align*}
\frac{\max_{\Omega\times (0,T)}e^{2s\varphi_\lambda}}{\min_{\Omega\times (0,T)}e^{2s\varphi_\lambda}}
\end{align*}
is of order of $10^{340}$ (see for instance \cite{MR3670259}). 

It seems reasonable  modify the algorithm presented here in order to obtain a numerical implementation, to validate at least with an example, the coefficient inverse problem studied in this article. In this direction, the modified algorithm  is part of our forthcoming work.
\end{remark}

\bibliographystyle{abbrv}
\bibliography{References1}

\end{document}